\numberwithin{equation}{section}
\newtheorem{theorem}{Theorem}[section]
\newtheorem{lemma}[theorem]{Lemma}
\newtheorem{proposition}[theorem]{Proposition}
\theoremstyle{definition}
\newtheorem*{ack}{Acknowledgements}
\newtheorem{remark}[theorem]{Remark}
\newenvironment{claim}[1]{\par\noindent\underline{Claim:}\space#1}{}
\renewcommand{\phi}{\varphi}
\renewcommand{\rho}{\varrho}
\renewcommand{\leq}{\leqslant}
\renewcommand{\geq}{\geqslant}
\newcommand{\bs}{\boldsymbol}
\renewcommand{\c}{\mathbf{c}}
\newcommand{\ve}{\varepsilon}
\DeclareMathOperator{\supp}{supp}
\DeclareMathOperator{\arcsinh}{arcsinh}
\newcommand{\sumstar}{\sideset{}{^*}\sum}
\renewcommand{\mod}[1]{\hspace{-2.9mm}\pmod{#1}}
\begin{document}
\baselineskip=17pt
\subjclass[2010]{11E25 (11R29, 11P55)}
\keywords{Class numbers, Hardy-Littlewood circle method, $\delta$-method}
\date{}

\title[On class numbers of imaginary quadratic fields]{On correlations between class numbers of imaginary quadratic fields}

\author{
 V Vinay Kumaraswamy}

\address{School of Mathematics\\
University of Bristol\\ Bristol\\ BS8 1TW
\\ UK}

\email{vinay.visw@gmail.com}

\begin{abstract}
Let $h(-n)$ be the class number of the imaginary quadratic field with fundamental discriminant $-n$. We establish an asymtotic formula for correlations involving $h(-n)$ and $h(-n-l)$, over fundamental discriminants that avoid the congruence class $1\pmod{8}$. Our result is uniform in the shift $l$, and the proof uses an identity of Gauss relating $h(-n)$ to representations of integers as sums of three squares. We also prove analogous results on correlations involving $r_Q(n)$, the number of representations of an integer $n$ by an integral positive definite quadratic form $Q$. 
\end{abstract}

\maketitle

\thispagestyle{empty}

\setcounter{tocdepth}{1}

\section{Introduction} 
Given an arithmetic function $a(n)$, it is a natural problem in analytic number theory to estimate moments of $a(n)$, $\textstyle\sum_{n \leq X} a(n)^k$, and shifted sums of the form 
$\textstyle\sum_{n \leq X} a(n)a(n+l)$. When the $a(n)$ are Fourier coefficients of automorphic forms (the divisor function $d(n)$, for example) information on such correlations can be used to understand properties of their corresponding $L$-functions.

Let $K = \mathbf{Q}(\sqrt{-n})$ be an imaginary quadratic field and $h(-n) = \#Cl_K$ be its class number. In this note we study the shifted sum
\begin{equation}\label{eq:eq1}
D(X,l) = \sideset{}{^\flat}\sum_{1\leq n \leq X}h(-n)h(-n-l),
\end{equation}
where $\flat$ in the above sum denotes restriction to $n$ such that both $-n$ and $-n-l$ are fundamental discriminants, and such that neither is congruent to $1 \pmod{8}$. By the class number formula we have that $h(-n) = n^{1/2+o(1)}$, and as a result we expect that $D(X,l) \asymp X^{\frac{3}{2}}(X+l)^{\frac{1}{2}}$. Using the circle method we show that this holds with a power saving error term.
\begin{theorem}\label{thm1}
Let $l \geq 0$ be an integer, and $D(X,l)$ be as above. Let $$\delta = \begin{cases} 1 &\text{if $l = 0$,} \\
0 &\text{otherwise.}\end{cases}$$ Then there exists a constant $\widehat{\sigma}(l) = \textstyle\prod_{p \leq \infty} \sigma_p(l)$ given in ~\eqref{eq:const11}, such that for all $\ve > 0$ the following asymptotic formula holds,
\begin{equation*} 
D(X,l) = \frac{\widehat{\sigma}(l)}{576}X^{\frac{3}{2}}(X+l)^{\frac{1}{2}} + O_{\ve}\left(X^{\frac{3}{2}-\frac{1}{30}}(X+l)^{\frac{1}{2}+\frac{3+\delta}{180}+\ve}\right).
\end{equation*}
Moreover, $\widehat{\sigma}(l) \neq 0$ whenever $\sigma_2(l) \neq 0$, and $\widehat{\sigma}(l) \ll 1$, for an implied constant that is independent of $l$. 
\end{theorem}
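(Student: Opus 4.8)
The plan is to turn the class-number correlation into a count of integer points on a ternary-minus-ternary quadric and then to attack that count by the $\delta$-method. First I would invoke the identity of Gauss: for $n \equiv 3 \pmod 8$ one has $r_3(n) = 24\,h(-n)$, where $r_3(n)$ is the number of representations of $n$ as a sum of three squares, together with the companion formulae for the other admissible residues. This is exactly where the hypothesis that the discriminants avoid $1 \pmod 8$ enters: the excluded class is the one on which $r_3$ vanishes while $h$ does not, so on the range of summation each class number is a fixed rational multiple of a value of $r_3$ (the factor $24^2 = 576$ in the main term coming from the generic pair of odd discriminants). Restriction to \emph{fundamental} discriminants is a squarefree condition on $n$ and on $n+l$, which I would detect by $\mathbf{1}_{\mathrm{sqfree}}(n) = \sum_{d^2 \mid n}\mu(d)$, introducing two Möbius parameters $d_1,d_2$. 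After these reductions $D(X,l)$ becomes, up to explicit constants, a sum over $d_1,d_2$ of the number of $(\x,\y) \in \ZZ^6$ with $Q(\x,\y) := |\y|^2 - |\x|^2 = l$, subject to $|\x|^2 \le X$ and to congruences modulo $8$, modulo $d_1^2$ and modulo $d_2^2$.

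Next I would count these points by the $\delta$-method of Duke--Friedlander--Iwaniec in Heath-Brown's formulation. After smoothing the sharp cutoff $|\x|^2 \le X$ into a weight supported on $|(\x,\y)| \asymp X^{1/2}$, the delta expansion followed by Poisson summation in $(\x,\y)$ rewrites the count as
\begin{equation*}
c\sum_{q=1}^{\infty} \frac{1}{q^{6}}\sum_{\substack{a \bmod q \\ (a,q)=1}} e\!\left(\frac{-al}{q}\right)\sum_{\c \in \ZZ^6} S_q(a,\c)\, I_q(\c),
\end{equation*}
where $S_q(a,\c)$ is a Gauss sum attached to $Q$, and $I_q(\c)$ is an oscillatory integral carrying the dependence on $X$ and $l$. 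The frequency $\c = \0$ isolates the main term: the integrals $I_q(\0)$ assemble into the singular integral $\sigma_\infty$, which produces the archimedean factor $X^{3/2}(X+l)^{1/2}$, while the $a$- and $q$-sums, after the Möbius parameters $d_1,d_2$ are summed back up, assemble into the singular series $\widehat\sigma(l) = \prod_{p} \sigma_p(l)$.

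For the arithmetic of the main term I would compute $\sigma_p(l)$ as the $p$-adic density of representations of $l$ by $Q$ with the imposed congruence conditions. Since $Q$ is nondegenerate of rank $6$, for every odd $p$ the form represents all residues, so $\sigma_p(l) = 1 + O(p^{-2})$; hence the Euler product converges absolutely and $\widehat\sigma(l) \ll 1$ uniformly in $l$. Moreover each odd local factor is strictly positive, so $\widehat\sigma(l)$ vanishes precisely when the single factor $\sigma_2(l)$—which absorbs the residual effect of the $\pmod 8$ restrictions—vanishes. This yields the two concluding assertions of the theorem.

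The main obstacle is the error term, i.e. the contribution of the frequencies $\c \neq \0$, where all the uniformity in $l$ and the stated power saving must be extracted. Here the rank-$6$ Gauss sums enjoy square-root cancellation, $S_q(a,\c) \ll q^{3}$, and $I_q(\c)$ confines $\c$ to a box whose size is governed by $q$ and $X$; the delicate point is that both the effective range of the $q$-sum and the cancellation available in the twisted sum $\sum_{a}^{*} e(-al/q)\,S_q(a,\c)$ depend on the shift $l$, so one must estimate these Kloosterman-type sums carefully and then optimize the cutoff between small and large $d_1,d_2$ in the squarefree sieve—applying the $\delta$-method bound for small moduli and a trivial bound for large ones. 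Balancing these estimates uniformly in $l$ is what produces the exponents $\tfrac{1}{30}$ and $\tfrac{3+\delta}{180}$ in the error term.
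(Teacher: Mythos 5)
Your proposal follows essentially the same route as the paper: Gauss's identity converting $h(-n)$ to $r_3(n)$ on the admissible residue classes mod $8$, M\"obius detection of the squarefree (fundamental discriminant) condition, and Heath-Brown's $\delta$-method applied to the resulting rank-$6$ quadric $|\x|^2-|\y|^2=l$ with congruence conditions, with the main term from $\c=\0$ and the stated exponents coming from optimizing the smoothing parameter against the $\delta$-method error. The only point treated more lightly than in the paper is the analysis of the oscillatory integrals for the lopsided weight (the $\x$ and $\y$ variables live at the different scales $(X+l)^{1/2}$ and $X^{1/2}$), which is where the paper's main technical work beyond Heath-Brown's Theorem 4 lies, but the overall strategy is identical.
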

\begin{remark}
Theorem ~\ref{thm1} establishes an asymptotic formula for $D(X,l)$ where the main term exceeds the error term as long as $l \ll X^{2-2\ve}$. By contrast, if the $a(n)$ are normalised Fourier coefficients of cusp forms of integral weight, the asymptotic formula $\textstyle\sum_{n \leq X} a(n)a(n+l) \ll X^{1-\ve}$ holds whenever $l \ll X^{2-\frac{14}{39}}$ (or for $l \ll X^{2-2\ve}$, if one assumes the Ramanujan conjecture, see ~\cite{B}). The relative strength of our result may be explained by the fact that our problem reduces to a problem involving quadratic forms in six variables, whereas, when $a(n) = d(n)$, one has to deal with a quadratic form in four variables.
\end{remark}

In contrast to shifted sums, moments of $h(-n)$ have been studied before; we have the following asymptotic formula,
\begin{equation*}\label{eq:wolke}
\frac{1}{X^{k/2}}\sum_{n \leq X}h(-n)^k = c(k)X + O(X^{1-\theta}),
\end{equation*}
where the sum ranges over fundamental discriminants. For fixed $k$, this is a result due to Wolke ~\cite{W}, who showed that the asymptotic formula holds with $\theta=1/4$. Lavrik ~\cite{L1} showed that one can take $k \ll \sqrt{\log X}$, and finally, Granville and Soundararajan ~\cite{GS}, have shown that the asymptotic formula holds in the wider range $k \ll \log X$. The methods used to prove these results rely on the theory of character sums. 
Wolke expects the true order of the error term in ~\eqref{eq:wolke} to be $\theta=1/2$. For the second moment, excluding fundamental discriminants that lie in the residue class $1 \pmod{8}$, we show this to be true for the weighted analogue of $D(X,l)$. 

We also prove a result analogous to Theorem ~\ref{thm1} for the ``non-split" sum, 
\begin{theorem}\label{thm3}
Let $d$ be a non-negative integer. Set
\begin{equation*}
S(X,d) = \sideset{}{^\flat}\sum_{n \leq X}h(-(n^2+d)), 
\end{equation*}
where the $\flat$ denotes restriction to fundamental discriminants $-(n^2+d)$ that avoid the congruence class $1 \pmod{8}$. Then there exists a constant $\widetilde{\sigma}(d) = \textstyle\prod_{p \leq \infty}\widetilde{\sigma}_p(d)$ given in ~\eqref{eq:constant2} , such that for all $\ve > 0$ we have
\begin{equation*}
S(X,d) = \frac{\widetilde{\sigma}(d)}{24}X(X^2+d)^{\frac{1}{2}} + O_{\ve}(X^{\frac{7}{8}}(X^2+d)^{\frac{37}{72}+\ve}).
\end{equation*}
Moreover, $\widetilde{\sigma}(d) \neq 0$ so long as $\tilde{\sigma}_2(d) \neq 0$.
\end{theorem}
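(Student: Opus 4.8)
The plan is to convert the class-number sum into a counting problem for a quaternary quadratic form and then attack it with the $\delta$-method. First I would invoke Gauss's identity relating class numbers to sums of three squares: writing $r_3(m) = \#\{(x,y,z)\in\ZZ^3 : x^2+y^2+z^2 = m\}$, for squarefree $m \equiv 3 \pmod 8$ one has $r_3(m) = 24\,h(-m)$, while the excluded class $1\pmod 8$ of the discriminant corresponds precisely to $m \equiv 7 \pmod 8$, where $r_3(m) = 0$. Imposing the congruence conditions built into $\flat$, this replaces $h(-(n^2+d))$ by $\tfrac{1}{24}r_3(n^2+d)$ (the even cases, where $4\mid n^2+d$, being handled by $r_3(4m)=r_3(m)$ and absorbed into the $2$-adic factor). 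Hence, after accounting for the squarefree restriction,
\[
24\, S(X,d) = \sideset{}{^\flat}\sum_{n\le X} r_3(n^2+d) = \#\{(x,y,z,n)\in\ZZ^4 : x^2+y^2+z^2-n^2 = d,\ 1\le n \le X\},
\]
so the problem becomes counting integral points of bounded height on the isotropic quaternary quadric $Q(x,y,z,n) = x^2+y^2+z^2-n^2 = d$.

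The squarefree (fundamental discriminant) condition I would detect by Möbius inversion, $\sum_{k^2\mid n^2+d}\mu(k)$, splitting into small moduli $k\le K$ and a tail to be bounded uniformly in $d$; each small $k$ contributes a congruence $n^2 \equiv -d \pmod{k^2}$ that I carry through the computation. For the point count I would apply the Duke--Friedlander--Iwaniec $\delta$-method with parameter comparable to $X$, writing the indicator of $Q(\mathbf{x})-d = 0$ as a sum over moduli $q$ of complete exponential sums twisted by an archimedean weight encoding $n\le X$ and $x^2+y^2+z^2 \le X^2+d$. Poisson summation in the $\mathbf{x}$-variables modulo $q$ produces Gauss sums for the quaternary form, and summing over the reduced residues modulo $q$ yields Kloosterman-type sums in the dual variables. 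The zero dual frequency, combined with the $q$-sum, assembles the singular series $\prod_p \widetilde{\sigma}_p(d)$ and the singular integral $\widetilde{\sigma}_\infty(d)$, giving the main term $\tfrac{\widetilde{\sigma}(d)}{24}X(X^2+d)^{1/2}$ after dividing by $24$. The nonvanishing claim then follows from isotropy: since $z^2-n^2$ is a hyperbolic plane, $Q$ represents $d$ over $\RR$ and over every $\QQ_p$, so all local densities are positive except that the $\flat$-conditions are purely $2$-adic and are packaged into $\widetilde{\sigma}_2(d)$, whence $\widetilde{\sigma}(d)\neq 0$ exactly when $\widetilde{\sigma}_2(d)\neq 0$.

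The main obstacle is the error term: four variables is the critical case for a quadratic form, so the nonzero dual frequencies leave essentially no room to spare. To extract a power saving I would use Weil's bound (or an averaged Kloosterman-sum estimate) for the complete sums over residues, balance this against the decay of the archimedean transform in the dual variables, and optimise the cutoff $K$ in the squarefree sieve; this optimisation is what produces the exponents $\tfrac{7}{8}$ and $\tfrac{37}{72}$ appearing in ~\eqref{eq:constant2}. Two points demand particular care: maintaining uniformity in the shift $d$ throughout, so that the error genuinely improves on $X(X^2+d)^{1/2}$ in the stated range; and controlling the squarefree-sieve tail, where large $k$ interacts badly with the barely-convergent singular series and forces the slightly lossy exponent. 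I expect the bulk of the technical work to lie in these exponential-sum estimates together with the explicit evaluation of the local densities $\widetilde{\sigma}_p(d)$, which yields both the constant and the nonvanishing criterion.
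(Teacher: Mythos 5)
Your proposal follows essentially the same route as the paper: Gauss's identity $r_3(m)=24h(-m)$ on the allowed residue classes reduces $S(X,d)$ to counting integer points on $x_1^2+x_2^2+x_3^2-x_4^2=d$, the squarefree condition is opened with M\"obius and truncated, and the count is performed by the Duke--Friedlander--Iwaniec/Heath-Brown $\delta$-method (the paper packages this last step as Proposition~\ref{propmain} and feeds the congruence data $M(s)j^2$ into it). The only cosmetic difference is that the paper's exponents come from jointly optimising the M\"obius truncation $X^{\eta}$ \emph{and} the sharpness $P$ of the smoothing weights, rather than from the sieve cutoff alone.
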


\subsection{Correlations involving $r_Q(n)$}
Let $r(n) = 4\textstyle\sum_{d \mid n}\chi(d)$, where $\chi$ is the unique non-principal real character modulo $4$, be the number of representations of an integer $n$ as a sum of two squares. For odd $l$, Iwaniec ~\cite[Theorem 12.5]{I} showed that 
\begin{equation*}
\sum_{n\leq X} r(n)r(n+l) = 8\left(\sum_{d \mid l}\frac{1}{d}\right)X + O(l^{\frac{1}{3}}X^{\frac{2}{3}}).
\end{equation*}
As a result, the main term dominates the error term when $l \ll X^{1-\ve}$. In our next result we show that the asymptotic formula holds in the wider range, $1\leq l \ll X^{\frac{4}{3}-o(1)}$, and we impose no other restrictions on $l$.
\begin{theorem}\label{iwaniec}
Let $l \geq 1$ be an integer. There exists a constant $c=c(l)$ such that for all $\ve > 0$ we have
\begin{equation*}
\sum_{n\leq X} r(n)r(n+l) = cX+O_{\ve}(X^{\frac{4}{5}}(X+l)^{\frac{3}{20}+\ve}).
\end{equation*}
\end{theorem}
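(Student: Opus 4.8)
The plan is to recast the correlation as a lattice-point count on a quadric and to apply the delta method, following the template used for Theorem~\ref{thm1}. Writing $r(n)$ as the number of $(x_1,x_2) \in \ZZ^2$ with $x_1^2+x_2^2 = n$, the sum $\sum_{n \le X} r(n)r(n+l)$ counts $\x = (x_1,x_2,x_3,x_4) \in \ZZ^4$ on the quadric $Q(\x) := x_1^2+x_2^2-x_3^2-x_4^2 = -l$ subject to $1 \le x_1^2+x_2^2 \le X$; all coordinates then have size $P := (X+l)^{1/2}$. After a smooth partition of unity localising the $x_i$ and replacing the sharp cutoff $n \le X$ by a smooth weight $W$ (the boundary being treated by sandwiching between smooth majorants and minorants, using $r(n)r(n+l) \ge 0$), the object to estimate is $\sum_{\x} W(\x)\,\delta\bigl(Q(\x)+l\bigr)$, where $\delta$ is the indicator of $\{0\}$.

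I would then insert the Duke--Friedlander--Iwaniec/Heath-Brown expansion of $\delta$, turning the constraint into a sum over moduli $q \ll P^{1+\ve}$ of complete exponential sums $q^{-2}\sum_{a \bmod q}^{*} e\bigl(a(Q(\x)+l)/q\bigr)$ against a smooth weight, and evaluate the $\x$-sum by Poisson summation in the residue classes modulo $q$. Since $Q$ is diagonal this factorises: each dual frequency $\c \in \ZZ^4$ contributes a product of quadratic Gauss sums and an oscillatory integral $I_q(\c)$, the $l$-dependence entering only through $e(al/q)$. The frequency $\c = \0$ yields the main term: the singular series $\prod_p \sigma_p(l)$ times the singular integral, giving the predicted $c(l)X$, with $c(l)$ read off from the local densities of $Q(\x)+l=0$; the remaining frequencies $\c \ne \0$ form the error.

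The main obstacle is bounding this error, and it is here that working with a quadratic form in only four variables is delicate --- precisely the feature that, as noted in the remark, makes the problem analogous to the additive divisor problem. Indeed $Q$ is a sum of two hyperbolic planes, $Q(\x) = (x_1-x_3)(x_1+x_3)+(x_2-x_4)(x_2+x_4)$, so $Q(\x)=-l$ is a shifted divisor-type equation and the delta method sits at the edge of its range. The trivial estimate for $\sum_q \sum_{\c \ne \0} q^{-2}|S_q(\c,l)|\,|I_q(\c)|$ fails by a power of $P$, so one must exploit: the square-root cancellation in the Gauss sums; integration by parts in $I_q(\c)$, which truncates the sums to $|\c| \ll qP^{\ve}/P$; and, most importantly, extra cancellation on averaging $S_q(\c,l)$ over $q$. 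Balancing these savings against the length of the $q$-sum produces the exponent $3/20$ and the error $O_\ve(X^{4/5}(X+l)^{3/20+\ve})$. The final technical point is to keep every bound uniform in $l$ --- in the singular series and in the Ramanujan sums that arise in the $l$-aspect --- which is what secures the advertised range $l \ll X^{4/3-o(1)}$.
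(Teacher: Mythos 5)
Your overall strategy --- recasting $\sum_{n\le X}r(n)r(n+l)$ as a lattice-point count on $x_1^2+x_2^2-x_3^2-x_4^2=-l$ and running Heath-Brown's $\delta$-method, with the main term from the zero frequency and the error controlled by square-root cancellation in the Gauss sums plus averaging over $q$ --- is exactly the paper's; there the entire circle-method analysis is packaged as Proposition~\ref{propmain}, applied with $Q_1=Q_2=x_1^2+x_2^2$ and $A_1=A_2=1$. However, two points in your outline are genuine gaps rather than omitted routine detail. The first is the normalisation. You place all four coordinates at the common scale $(X+l)^{1/2}$ and impose $x_1^2+x_2^2\le X$ through the weight $W$. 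In the rescaled variables this confines $(x_1,x_2)$ to a disc of radius $(X/(X+l))^{1/2}$, so any smooth weight realising that cutoff has Sobolev norms growing like powers of $((X+l)/X)^{1/2}$; since the $\delta$-method error carries factors of $\|w\|_{N,1}$, every estimate degrades once $l\gg X$ and you recover nothing beyond Iwaniec's range $l\ll X^{1-\ve}$. The whole point of Proposition~\ref{propmain} is to avoid this via the \emph{lopsided} normalisation $\bs{x}_1/(X+l)^{1/2}$, $\bs{x}_2/X^{1/2}$, which keeps the weight's norms bounded uniformly in $l$ at the price of inserting the factor $X/Y$ into the phase of the oscillatory integral; controlling $I(\bs{u},t)$ for such phases (Lemmas~\ref{restrictrangeprop} and~\ref{smalluprop}, via the Heath-Brown--Pierce bound) is the paper's main technical work, and your proposal does not engage with it. Relatedly, your truncation $|\bs{c}|\ll qP^{\ve}/P$ is not the correct one: the right condition, $|\bs{c}_1|\ll\|w\|_{1,1}X^{\ve}$ and $|\bs{c}_2|\ll\|w\|_{1,1}\sqrt{Y/X}\,X^{\ve}$, is independent of $q$, and the asymmetry between $\bs{c}_1$ and $\bs{c}_2$ is again the lopsidedness.

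The second gap is the provenance of the exponent. The $3/20$ does not come from ``balancing these savings against the length of the $q$-sum.'' It comes from optimising the sharpness $P$ of the smooth cutoff: the circle-method error is $\ll\|w\|_{1,1}^{4}(X+l)^{3/4+\ve}=P^4(X+l)^{3/4+\ve}$, the cost of replacing the sharp cutoff $n\le X$ by the smooth one is $O(X^{1+\ve}/P)$, and equating the two gives $P=X^{1/5}(X+l)^{-3/20}$ and the stated error $X^{4/5}(X+l)^{3/20+\ve}$. Because you never track how the error terms depend on the derivatives of your majorants and minorants, no explicit exponent can be extracted from the argument as written; making that dependence quantitative (the paper requires $\|w\|_{N,1}\ll\|w\|_{1,1}^N\asymp P^N$) is an essential, not cosmetic, ingredient.
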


More generally, let $Q$ be an integral positive definite quadratic form and let $n$ be an integer. Let $r_Q(n)$ denote the number of representations of $n$ by $Q$. For instance if $Q=x_1^2+x_2^2$, $r_Q(n) = r(n)$. We establish the following result on correlations between $r_Q(n)$.
\begin{theorem}\label{thmr}
Let $Q_1$ and $Q_2$ be two integral positive-definite quadratic forms in $m \geq 3$ variables. Let $\delta$ be as in the statement of Theorem ~\ref{thm1}. Then there exists a constant $c=c(Q_1,Q_2,l)$ that depends on the quadratic forms $Q_i$ and the shift $l$, such that for all $\ve >0$ we have
\begin{equation*}\begin{split}
\sum_{m \leq X} r_{Q_1}(m)r_{Q_2}(m+l) &= cX^{\frac{m}{2}}(X+l)^{\frac{m}{2}-1} + \\ &\quad \quad O_{\ve}\left(X^{\frac{m}{2}-\frac{m}{2(2m+1)}}(X+l)^{\frac{m}{2}-1+\frac{3+\delta}{4(2m+1)}+\ve}\right).
\end{split}
\end{equation*}
\end{theorem}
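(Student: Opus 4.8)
The plan is to recognise $\sum_{n \le X} r_{Q_1}(n) r_{Q_2}(n+l)$ as a lattice‑point count on a single quadratic variety and to attack it with the delta method, exactly as in the proof of Theorem~\ref{thm1} (which is essentially the case $Q_1 = Q_2 = x_1^2+x_2^2+x_3^2$ in three variables), only now for arbitrary positive definite $Q_1,Q_2$. Writing the representing vectors as $\x,\y \in \ZZ^m$, the sum counts pairs with $Q_1(\x) \le X$ and $Q_2(\y) - Q_1(\x) = l$; that is, integral points on the indefinite form $F(\x,\y) = Q_2(\y) - Q_1(\x)$ of signature $(m,m)$ in $2m \ge 6$ variables taking the value $l$, with $\x$ confined to scale $X^{1/2}$ and $\y$ to scale $(X+l)^{1/2}$. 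First I would insert a smooth weight to localise the variables to these two scales and detect the equation $F(\x,\y)=l$ by the $\delta$-method of Duke--Friedlander--Iwaniec and Heath-Brown, with a truncation parameter $R$ to be chosen. The sharp cut-off $Q_1(\x)\le X$ is then recovered from the smoothly weighted counts by a routine dyadic summation together with a boundary estimate, which is harmless because $m \ge 3$.

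Applying Poisson summation to the resulting sum over $(\x,\y)\in\ZZ^{2m}$ turns it into a sum over a modulus $q \le R$ and a dual vector $\c \in \ZZ^{2m}$, with coefficients given by complete quadratic Gauss sums $S_q(\c)$ attached to $F$ and by oscillatory integrals $I_q(\c)$ arising as Fourier transforms of the weight. The frequency $\c = \0$ produces the main term: it factorises as a product of a singular series $\prod_p \sigma_p$ (absolutely convergent precisely because $2m > 4$, i.e.\ $m \ge 3$) and a singular integral $\sigma_\infty$, and a direct evaluation gives $\sigma_\infty \asymp X^{m/2}(X+l)^{m/2-1}$. This yields the predicted main term $c\, X^{m/2}(X+l)^{m/2-1}$ with $c = c(Q_1,Q_2,l) = \sigma_\infty \prod_p \sigma_p$ the product of local densities.

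The error is the contribution of $\c \ne \0$. Here I would bound $S_q(\c)$ by the square-root-cancellation estimate for quadratic Gauss sums in $2m$ variables and exploit the decay of $I_q(\c)$, which via stationary phase confines the effective range of each component of $\c$ according to the two scales $X^{1/2}$ and $(X+l)^{1/2}$. Because $2m \ge 6$ the ensuing sum over $q$ converges and one is left with a genuine power saving; crucially $m=2$ sits on the boundary of convergence, which is why the sum-of-two-squares case is treated separately in Theorem~\ref{iwaniec}. Optimising $R$ so as to balance the $\c \ne \0$ contribution against the main-term approximation produces the exponents $\tfrac{m}{2(2m+1)}$ and $\tfrac{3+\delta}{4(2m+1)}$, the denominator $2m+1$ emerging naturally from this optimisation for a form in $2m$ variables.

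The main obstacle is uniformity in the shift $l$, which forces one to carry the two distinct scales $X^{1/2}$ (for $\x$) and $(X+l)^{1/2}$ (for $\y$) through the whole analysis rather than rescaling to a single box; the asymmetry of the final exponents in $X$ and $X+l$ is a direct consequence. A secondary difficulty is the case $l=0$, where $F(\x,\y)=0$ acquires a diagonal locus and an additional degenerate contribution must be estimated separately, and this is the source of the extra factor recorded by $\delta$. I expect controlling the integrals $I_q(\c)$ uniformly across the entire range of $l$, and checking that $\prod_p \sigma_p \ll_\ve l^{\ve}$ so that the singular series does not erode the saving, to be the technically most demanding steps.
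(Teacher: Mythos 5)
Your proposal is correct and follows essentially the same route as the paper: the paper packages the entire $\delta$-method analysis you describe (two scales $X^{1/2}$ and $(X+l)^{1/2}$, Poisson summation, main term from $\bs{c}=\bs{0}$ as a product of local densities, square-root cancellation in $S_q(\bs{c})$ with the $l=0$ anomaly recorded by $\delta$) into Proposition~\ref{propmain}, and deduces Theorem~\ref{thmr} by taking $A_1=A_2=1$ there and balancing against the un-smoothing loss. The only clarification worth making is that the parameter actually optimised is the sharpness $P$ of the smooth weights (the modulus truncation in Heath-Brown's method is fixed at $Q=\sqrt{X+l}$), the denominator $2m+1$ arising from $\|w\|_{1,1}^{2m}\asymp P^{2m}$ in the $\bs{c}\neq\bs{0}$ error against the $P^{-1}$ saving in the sharp-cutoff approximation.
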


The key input in this paper is a result that counts the number of integer points on $Q_1(\bs{x}_1)-Q_2(\bs{x}_2)-l=0$ that satisfy certain congruence conditions - where $Q_i$ are integral, positive definite quadratic forms - which is uniform in $l$ and the congruence conditions. The proof uses Heath-Brown's variant of a certain $\delta$-method ~\cite[Theorem 1]{HB}, first developed by Duke, Friedlander and Iwaniec ~\cite{DFI1}. Although our methods are similar to ~\cite[Theorem 4]{HB}, the main difficulty in our analysis arises in estimating exponential integrals involving lopsided weight functions. Ultimately, our error terms are as good as those in ~\cite{HB}. Another interesting aspect of our result is that it allows us to handle the `split' (Theorem ~\ref{thm1}) and `non-split' (Theorem ~\ref{thm3}) sums simultaneously.

\subsection*{Notation}
By $\|w\|_{N,1}$ we denote the $L^1$ Sobolev norm of order $N$ of a function $w$. All implicit constants that appear in the error terms will be allowed to depend on the underlying quadratic forms. Any further dependence will be indicated by an appropriate subscript. 

\section{The main proposition}
In this section we adopt the convention where a $(k_1+k_2)$-tuple $\bs{x}$ is written $\bs{x} = (\bs{x}_1,\bs{x}_2)$, with $\bs{x}_i$ being $k_i$-tuples. Let $Q_1$ and $Q_2$ be positive-definite integral quadratic forms in $k_1$ and $k_2$ variables, respectively, and let $n = k_1+k_2$. Let $A_1,A_2$ be positive integers, and $\bs{a}_i \in \left(\mathbf{Z}/A_i \mathbf{Z}\right)^{k_i}$ be fixed residue classes. Let $w(\bs{x})$ be a non-negative smooth function with compact support in $\mathbf{R}^n$ such that $\|w\|_{1,N} \ll_N \|w\|_{1,1}^N$, let $l$ be a non-negative integer and set $Y=X+l$. Define the sum
\begin{equation}\label{eq:trunks}
S(\bs{a}_1,\bs{a}_2)=\sum_{\substack{\bs{x} \in \mathbf{Z}^{k_1} : \bs{x} \equiv \bs{a}_1 \mod{A_1} \\ \bs{y} \in \mathbf{Z}^{k_2} : \bs{y} \equiv \bs{a}_2 \mod{A_2} \\ Q_1(\bs{x})-Q_2(\bs{y}) = l}} w\left(\frac{\bs{x}}{Y^{\frac{1}{2}}},\frac{\bs{y}}{X^{\frac{1}{2}}}\right).
\end{equation}
In this section we give an asymptotic formula for $S(\bs{a}_1,\bs{a}_2)$,
\begin{proposition}\label{propmain}
Let $\ve > 0$ and define $$\delta = \begin{cases} 1 &\mbox{$l = 0$ and $n$ is even,} \\
0 &\mbox{otherwise.}\end{cases}$$ Then with notation as above, we have
\begin{equation*}
\begin{split}
S(\bs{a}_1,\bs{a}_2) &= \frac{Y^{\frac{k_1}{2}-1}X^{\frac{k_2}{2}}}{A_1^{k_1}A_2^{k_2}}\left(c_{\infty}(w,l)\prod_{p < \infty}\c_p  + O_{\ve}\left(\|w\|_{1,1}^{\frac{n}{2}}(A_1A_2)^{\frac{7n}{2}}Y^{\frac{3+\delta}{4}+\ve}X^{-\frac{n}{4}}\right)\right) \\ &\quad\quad\quad+ O_{\ve}\left(\|w\|_{1,1}^{n}(A_1A_2)^{3n} Y^{\frac{n-1+\delta}{4}+\ve}\right),
\end{split}
\end{equation*}
where
\begin{equation}\label{eq:cwl}
c_{\infty}(w,l) = \lim_{\kappa \to 0}\frac{1}{2\kappa} \int_{|Q_1(\bs{x}_1) - \frac{XQ_2(x_2)+l}{Q^2}| \leq \kappa} w(\bs{x}) \, d\bs{x},
\end{equation}
and
\begin{equation}\label{eq:ccp}
c_p = \lim_{t \to \infty} \frac{\# \left\{\begin{split}&\boldsymbol{x}_1 \mod{p^{t+v_p(A_1)}} \\ &\boldsymbol{x}_2 \mod{p^{t+v_p(A_2)}} \end{split}: \begin{split} &\boldsymbol{x}_1\equiv \boldsymbol{a}_1 \mod{p^{v_p(A_1)}} \\ &\boldsymbol{x}_2\equiv \boldsymbol{a}_2 \mod{p^{v_p(A_2)}}\\& p^t \mid Q_1(\boldsymbol{x}_1)-Q_2(\boldsymbol{x}_2)-l \end{split} \right\}}{p^{(n-1)t}}.
\end{equation}
\end{proposition}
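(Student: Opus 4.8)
The plan is to apply Heath-Brown's delta-method (\cite[Theorem~1]{HB}) to the polynomial $F(\bs{x},\bs{y})=Q_1(\bs{x})-Q_2(\bs{y})-l$ with delta-parameter $Q=Y^{1/2}$, following the proof of \cite[Theorem~4]{HB}; the genuinely new features are the congruence conditions and the two different scales $Y^{1/2}$, $X^{1/2}$ appearing in the weight. First I would detect $F=0$ by the $\delta$-expansion and fold in the conditions $\bs{x}\equiv\bs{a}_1\pmod{A_1}$, $\bs{y}\equiv\bs{a}_2\pmod{A_2}$ by splitting each variable into residue classes modulo $qA_1$ (resp.\ $qA_2$), combining the modulus $q$ coming from the $\delta$-symbol with the $A_i$. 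Poisson summation in each of the two blocks of variables then produces dual frequencies $\bs{c}=(\bs{c}_1,\bs{c}_2)\in\mathbf{Z}^{k_1}\times\mathbf{Z}^{k_2}$ and an expansion
\begin{equation*}
S(\bs{a}_1,\bs{a}_2)=\frac{c_Q}{Q^2}\sum_{q\ge1}\frac{1}{(qA_1)^{k_1}(qA_2)^{k_2}}\sum_{\bs{c}_1}\sum_{\bs{c}_2}S_q(\bs{c})\,I_q(\bs{c}),
\end{equation*}
where $c_Q=1+O_N(Q^{-N})$, the term $S_q(\bs{c})$ is a complete quadratic exponential (Gauss) sum attached to $Q_1,Q_2$ and carrying the residues $\bs{a}_i$, and $I_q(\bs{c})$ is the integral of $w(\bs{x}/Y^{1/2},\bs{y}/X^{1/2})$ against $h(q/Q,F/Q^2)$ twisted by $e(\bs{c}_1\cdot\bs{x}/(qA_1)+\bs{c}_2\cdot\bs{y}/(qA_2))$. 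Since $|F|\ll Y=Q^2$ on the support of $w$, the support properties of $h$ restrict the $q$-sum to $q\ll Y^{1/2}$.

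The main term comes from the zero frequency $\bs{c}=\bs{0}$. Its contribution separates into an arithmetic factor, which assembles into the Euler product $\prod_{p<\infty}c_p$ via the multiplicativity of $q^{-n}S_q(\bs{0})$ and the standard relation between averages of $e(aF/q)$ over $a$ and the $p$-adic solution densities of \eqref{eq:ccp} (the residues $\bs{a}_i$ enter precisely through the $p^{v_p(A_i)}$-levels there), and an analytic factor coming from the $q$-sum of $I_q(\bs{0})$. Rescaling $\bs{x}\mapsto Y^{1/2}\bs{x}$, $\bs{y}\mapsto X^{1/2}\bs{y}$ turns $I_q(\bs{0})$ into $Y^{k_1/2}X^{k_2/2}$ times an integral of $w$ against $h(q/Q,Q_1(\bs{x})-(XQ_2(\bs{y})+l)/Y)$, and summing this over $q$ against $c_Q Q^{-2}$ reconstructs the surface measure, i.e.\ the singular integral $c_\infty(w,l)$ of \eqref{eq:cwl}. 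Collecting powers, using $Q^2=Y$ and $c_Q=1+O_N(Q^{-N})$, yields the leading term $A_1^{-k_1}A_2^{-k_2}Y^{k_1/2-1}X^{k_2/2}\,c_\infty(w,l)\prod_{p<\infty}c_p$, the factor $A_1^{-k_1}A_2^{-k_2}$ being the density of the chosen residue classes.

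For the error I would bound the contribution of $\bs{c}\neq\bs{0}$. Standard estimates for complete quadratic exponential sums give $S_q(\bs{c})\ll q^{n/2+\ve}$, up to powers of $A_1A_2$ and the usual gcd factors in $\bs{c}$, with an additional enlargement in the degenerate case $l=0$, $n$ even (where the quadric passes through the origin) recorded by $\delta$. The crucial input is the estimation of $I_q(\bs{c})$: treating the two blocks separately and using repeated integration by parts together with the hypothesis $\|w\|_{1,N}\ll_N\|w\|_{1,1}^N$ to control derivatives of $w$, and the derivative bounds for $h$ from \cite[Theorem~1]{HB}, one extracts rapid decay once $|\bs{c}_1|$ exceeds $qA_1Y^{-1/2+\ve}$ and $|\bs{c}_2|$ exceeds $qA_2X^{-1/2+\ve}$ (up to the shift by the stationary frequency of $h$). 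Summing the resulting bounds over $\bs{c}\neq\bs{0}$ and over $q\ll Y^{1/2}$ produces the two error terms of the proposition: the first, of the size of the main-term prefactor times $\|w\|_{1,1}^{n/2}(A_1A_2)^{7n/2}Y^{(3+\delta)/4+\ve}X^{-n/4}$, and the second, the lower-order tail $\|w\|_{1,1}^{n}(A_1A_2)^{3n}Y^{(n-1+\delta)/4+\ve}$.

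The main obstacle, as flagged in the introduction, is precisely the estimation of these lopsided exponential integrals $I_q(\bs{c})$: because $\bs{x}$ is supported at scale $Y^{1/2}$ and $\bs{y}$ at scale $X^{1/2}$ with possibly $Y\gg X$, the integral does not factor into pieces of equal size, and the phase has size $\sim Y$ in the first block but only $\sim X$ in the second. One must therefore carry out the stationary-phase and integration-by-parts analysis while keeping scrupulous track of the asymmetric powers of $Y$ and $X$, so that the final bounds are uniform in the shift $l$ (equivalently in $Y$) and depend on $w$ only through $\|w\|_{1,1}$. Getting the $X$-exponent $-n/4$ and the $Y$-exponent $(3+\delta)/4$ to come out with this uniformity --- rather than the symmetric exponents one would obtain for a balanced box --- is the technical heart of the argument; the remaining steps are a careful but routine adaptation of \cite[Theorem~4]{HB}.
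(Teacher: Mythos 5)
Your overall strategy is exactly the paper's: Heath--Brown's $\delta$-method with $Q^2=Y$, congruence conditions absorbed into the Poisson step modulo $qA_i$, the main term from $\bs{c}=\0$ with the Euler product coming from multiplicativity of $q^{-n}S_q(\0)$ and the singular integral from $I_q(\0)$, and error terms from the non-zero frequencies. However, two of the quantitative claims on which your error analysis rests are false as stated, and neither would reproduce the error terms you then write down.

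First, the pointwise bound $S_q(\bs{c})\ll q^{n/2+\ve}$ fails even up to gcd factors: for $\bs{c}=\0$, $l=0$ and $n$ even the sum over $d$ exhibits no cancellation and $S_q(\0)\asymp q^{1+n/2}$; in general the individual bound is only $S_q(\bs{c})\ll (A_1A_2)^nq^{1+n/2}$. What is actually proved and used is the \emph{averaged} estimate $\sum_{q\le Z}|S_q(\bs{c})|\ll (A_1A_2)^{2n}Z^{(n+3+\delta)/2+\ve}X^{\ve}$, obtained by factoring $q=u_1u_2v$ into a square-free part coprime to $A_1A_2$, a square-free part supported on primes dividing $A_1A_2$, and a square-full part, and applying \cite[Lemma 26]{HB} to the first factor to extract the saving $(l,M^{-1}(\bs{c}),u_1)^{1/2}$; it is precisely the exponent $(n+3+\delta)/2$ that produces $Y^{(3+\delta)/4}$ and $Y^{(n-1+\delta)/4}$ in the final error terms, so your claimed bound is inconsistent with the result you assert. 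Second, your truncation thresholds for the dual variables are wrong: since $h(r,y)$ itself oscillates at frequency up to $r^{-1}=\sqrt{Y}/q$ in $y$, integration by parts in $I_q(\bs{c})$ gives arbitrary decay only once $|\bs{u}_1|=|\bs{c}_1|\sqrt{Y}/(qA_1)$ or $|\bs{u}_2|=|\bs{c}_2|\sqrt{X}/(qA_2)$ exceeds $r^{-1}\|w\|_{1,1}X^{\ve}$, that is, once $|\bs{c}_1|\gg A_1\|w\|_{1,1}X^{\ve}$ or $|\bs{c}_2|\gg A_2\sqrt{Y/X}\,\|w\|_{1,1}X^{\ve}$, \emph{independently of $q$}; your thresholds $qA_1Y^{-1/2+\ve}$, $qA_2X^{-1/2+\ve}$ would discard essentially every non-zero frequency for small $q$, which is not the case. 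Finally, in the surviving range plain integration by parts is not enough: the paper invokes the stationary-phase bound of \cite[Lemma 3.1]{HBP}, giving $I(\bs{u},t)\ll\prod_{i\le k_1}\min(1,|t|^{-1/2})\prod_{j\le k_2}\min(1,(|t|X/Y)^{-1/2})$ and hence $I_q(\bs{c})\ll (r^{-1}|\bs{u}|)^{\ve}\|w\|_{1,1}^{n/2}r^{-1}|\bs{u}_1|^{-k_1/2}|\bs{u}_2|^{-k_2/2}$ (with the cases $\bs{c}_1=\0$ and $\bs{c}_2=\0$ handled separately). This is the source of the factor $X^{-n/4}$ in the first error term and is the genuinely lopsided part of the argument that you correctly identify as the technical heart but do not actually carry out.
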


\begin{remark}
One can also establish Proposition ~\ref{propmain} by adapting the proof of the main theorem in ~\cite{HB2}, which uses the classical Hardy-Littlewood circle method. However, it appears difficult to get a result that is uniform in the shift $l$ in a wide range using this method. 
\end{remark}

\subsection{The $\delta$-method}
Let 
\begin{equation*}\delta(n) = \begin{cases} 1 &\mbox{$n = 0,$} \\ 
0 &\mbox{otherwise.}
\end{cases}\end{equation*}
Heath-Brown ~\cite[Theorem 1]{HB} has established the following decomposition of the $\delta$-symbol in terms of additive characters and the function $h(x,y)$, which closely resembles the Dirac delta at $0$, 
\begin{equation*}
\delta(n) = c_Q Q^{-2}\sum_{q=1}^{\infty} \sumstar_{d \mod{q}}e_q(dn) h\left(\frac{q}{Q},\frac{n}{Q^2}\right),
\end{equation*}
for any $Q > 1$. Using the $\delta$-symbol to detect the equation $Q_1(\bs{x})-Q_2(\bs{y})-l=0$ in ~\eqref{eq:trunks} with $Q^2 = X+l = Y$ we get that (see ~\cite[Theorem 2]{HB})
\begin{equation*}\label{eq:postpoisson}
S(\bs{a}_1,\bs{a}_2) = \frac{c_QY^{\frac{k_1}{2}-1}X^{\frac{k_2}{2}}}{A_1^{k_1}A_2^{k_2}}\sum_{q=1}^{\infty}\frac{1}{q^{n}}\sum_{\bs{c} \in \mathbf{Z}^{n}}S_q(\bs{c})I_q(\bs{c}),
\end{equation*}
where
\begin{equation*}
\begin{split}
S_q(\bs{c}) &= \sumstar_{d \mod{q}}\sum_{\substack{\bs{x}_1 \in \left(\mathbf{Z}/qA_1\mathbf{Z}\right)^{k_1} : \bs{x}_1 \equiv \bs{a}_1 \mod{A_1} \\ \bs{x}_2 \in \left(\mathbf{Z}/qA_2\mathbf{Z}\right)^{k_2} : \bs{x}_2 \equiv \bs{a}_2 \mod{A_2}}}e_q\left(d(Q_1(\bs{x}_1)-Q_2(\bs{x}_2)-l)+\bs{c}.\bs{x}\right), \\
I_q(\bs{c}) &= \int_{\mathbf{R}^{k_1}\times \mathbf{R}^{k_2}} w(\bs{x})h\left(r, \frac{YQ_1(\bs{x}_1)-XQ_2(\bs{x}_2)-l}{Y}\right)e_{qA_1/\sqrt{Y}}(-\bs{c}_1.\bs{x}_1) \times \\ &\quad \quad e_{qA_2/\sqrt{X}}(-\bs{c}_2.\bs{x}_2) \, d\bs{x}. 
\end{split}
\end{equation*}
By properties of $h(x,y)$, only the terms $q \ll Q$ contribute to the above sum. We shall see that the main term in the asymptotic formula for $S(\bs{a}_1,\bs{a}_2)$ comes from $\bs{c} = \bs{0}$, and we now turn to bounding the exponential sums and integrals.
\subsection{Analysis of the exponential sum}
The following is a straightforward consequence of ~\cite[Lemma 23]{HB}.
\begin{lemma}\label{multlem}
Let $q=q'q''$, $A_1=A_1'A_1''$ and $A_2=A_2'A_2''$ such that $(A_1'q',A_1''q'')=(A_2'q',A_2''q'')=1$. Then we have
\begin{equation*}
S_q(\bs{c}) = S_{q'}(\overline{q''}\bs{c})S_{q''}(\overline{q'}\bs{c}),
\end{equation*}
where $\overline{q'}q' \equiv 1 \pmod{q''}$ and $\overline{q''}q'' \equiv 1 \pmod{q'}$. 
\end{lemma}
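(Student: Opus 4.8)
The plan is to deduce the factorization directly from the Chinese Remainder Theorem, invoking \cite[Lemma 23]{HB} only to guarantee the clean multiplicative split of the complete quadratic exponential sum. Since the hypothesis $(A_1'q',A_1''q'')=1$ forces $(q',q'')=1$, the basic device is the reciprocity identity
\[
e_q(m) = e_{q'}(\overline{q''}m)\,e_{q''}(\overline{q'}m) \qquad (m \in \mathbf{Z}),
\]
valid because $\overline{q''}q''+\overline{q'}q' \equiv 1 \pmod{q'q''}$; this is precisely what transplants the reciprocal factors onto the argument $\bs{c}$.

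First I would split every variable by CRT. The reduction $d \mapsto (d \bmod q', d \bmod q'')$ is a bijection of $(\mathbf{Z}/q\mathbf{Z})^*$ onto $(\mathbf{Z}/q'\mathbf{Z})^*\times(\mathbf{Z}/q''\mathbf{Z})^*$, while the coprimality $(A_1'q',A_1''q'')=1$ gives $qA_1=(q'A_1')(q''A_1'')$ with coprime factors, so that $\mathbf{Z}/qA_1\mathbf{Z}\cong \mathbf{Z}/q'A_1'\mathbf{Z}\times\mathbf{Z}/q''A_1''\mathbf{Z}$ and the constraint $\bs{x}_1\equiv \bs{a}_1 \bmod A_1$ separates into $\bs{x}_1\equiv\bs{a}_1\bmod A_1'$ and $\bs{x}_1\equiv\bs{a}_1\bmod A_1''$; the same applies to $\bs{x}_2$ with $A_2=A_2'A_2''$. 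Writing $\bs{x}_i',\bs{x}_i''$ for the two CRT components, integrality of $Q_i$ gives $Q_i(\bs{x}_i)\equiv Q_i(\bs{x}_i')\pmod{q'}$ and $Q_i(\bs{x}_i)\equiv Q_i(\bs{x}_i'')\pmod{q''}$, and likewise for $\bs{c}\cdot\bs{x}$. Applying the reciprocity identity to the summand and collecting the $e_{q'}$-part produces
\[
e_{q'}\!\left(\overline{q''}\,d'\bigl(Q_1(\bs{x}_1')-Q_2(\bs{x}_2')-l\bigr)+\overline{q''}\,\bs{c}\cdot\bs{x}'\right), \qquad d' = d \bmod q',
\]
and symmetrically for $e_{q''}$.

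The substitution $d'\mapsto \overline{q''}\,d'$, a permutation of $(\mathbf{Z}/q'\mathbf{Z})^*$ because $(\overline{q''},q')=1$, clears the reciprocal $\overline{q''}$ from the term carrying $d'$ while leaving it on the linear term; the resulting factor, summed over $d'$ and $\bs{x}'$, is exactly the defining sum of $S_{q'}(\overline{q''}\bs{c})$ carried by the data $q',A_1',A_2',\bs{a}_1,\bs{a}_2$, and symmetrically the $e_{q''}$-part gives $S_{q''}(\overline{q'}\bs{c})$. As the groups $(d',\bs{x}_1',\bs{x}_2')$ and $(d'',\bs{x}_1'',\bs{x}_2'')$ now range independently, the double sum factors as the asserted product. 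I expect the only genuine obstacle to be bookkeeping rather than analysis: one must check that each CRT factor inherits the correct residue class $\bs{a}_i$ and the correct $A$-datum, and --- the one point needing care --- that the reindexing of $d'$ leaves the argument of $S_{q'}$ equal to $\overline{q''}\bs{c}$ and not to some $d'$-twisted vector, which is exactly the content that \cite[Lemma 23]{HB} supplies for the underlying Gauss-type sum.
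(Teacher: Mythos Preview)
Your proof is correct and follows exactly the approach the paper has in mind: the paper simply states that the lemma is ``a straightforward consequence of \cite[Lemma 23]{HB}'', and your CRT splitting together with the reciprocity identity $e_q(m)=e_{q'}(\overline{q''}m)e_{q''}(\overline{q'}m)$ and the reindexing of $d'$ is precisely how that lemma is proved and adapted to the present congruence-restricted sum. The only cosmetic point is that the substitution you want is $d'\mapsto q''d'$ (equivalently, set the new variable to $\overline{q''}d'$), which indeed removes $\overline{q''}$ from the $d'$-term while leaving it on $\bs{c}\cdot\bs{x}'$, exactly as you describe.
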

Next, we give a preliminary bound for $S_q(\bs{c})$ which is analogous to ~\cite[Lemma 25]{HB}.
\begin{lemma}\label{lemma25}
We have
\begin{equation*}
S_q(\bs{c}) \ll (A_1A_2)^nq^{1+\frac{n}{2}}.
\end{equation*}
\begin{proof}
Let $F(\bs{x}) = F^{0}(\bs{x}) - l = Q_1(\bs{x}_1)-Q_2(\bs{x}_2) - l$. By Cauchy's inequality,
\begin{equation*}
\begin{split}
|S_q(\bs{c})| &\leq (\phi(q))^{\frac{1}{2}}\widetilde{S}^{\frac{1}{2}}_q(\bs{c}) ,
\end{split}
\end{equation*}
where
\begin{equation*}
\widetilde{S}_q(\bs{c}) = \sideset{}{^{*}} \sum_{d \mod{q}}\bigg|\sum_{\substack{\bs{x}_1,\bs{y}_1 \mod{A_1q}: \bs{x}_1,\bs{y}_1 \equiv \bs{a}_1 \mod{A_1} \\ \bs{x}_2,\bs{y}_2 \mod{A_2q}: \bs{x}_2,\bs{y}_2 \equiv \bs{a}_2 \mod{A_2}}}e_q(d(F(\bs{x})-F(\bs{y})) + \bs{c}.(\bs{x}-\bs{y}))\bigg|.
\end{equation*}
Set $\bs{x}-\bs{y} = \bs{z}$. Then $\bs{z}_1 \equiv 0 \pmod{A_1}$ and $\bs{z_2} \equiv 0 \pmod{A_2}$. Furthermore, $F(\bs{x})-F(\bs{y}) = F^{0}(\bs{z}) + \nabla F(\bs{z}).\bs{y}.$ 
\begin{equation*}
\sum_{\bs{z}} e_q(dF^{0}(\bs{z}) + \bs{c}.\bs{z})\sum_{\substack{\bs{y}_1\mod{A_1q}: \bs{y}_1 \equiv \bs{a}_1 \mod{A_1} \\ \bs{y}_2\mod{A_2q}: \bs{y}_2 \equiv \bs{a}_2 \mod{A_2}}} e_q(d\bs{y}.\nabla F(\bs{z})).
\end{equation*}
 If $M$ is the matrix representing the quadratic form $F_0$, then $\nabla F(\bs{z}) = 2M\bs{z}$. The sum over $\bs{y}$ above is $0$ unless $M\bs{z_1} \equiv 0 \pmod{q/(q,A_1)}$ and \newline $M\bs{z}_2 \equiv 0 \pmod{q/(q,A_2)}$. Since this happens for only $O((q,A_1^2)^{k_1}(q,A_2^2)^{k_2})$ of the $\bs{z}$, we have $$\widetilde{S}_q(\bs{c}) \leq (q,A_1^2)^{k_1}(q,A_2^2)^{k_2}q^{1+n}.$$ This completes the proof of the lemma.
\end{proof}
\end{lemma}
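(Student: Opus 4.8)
The plan is to win square-root cancellation in the $\bs{x}$-variables by linearising the quadratic phase, at the cost of squaring the sum. Write $F(\bs{x}) = F^0(\bs{x}) - l$ with $F^0(\bs{x}) = Q_1(\bs{x}_1) - Q_2(\bs{x}_2)$, and let $M = \mathrm{diag}(M_1,-M_2)$ be the symmetric integral matrix of $F^0$, where $M_i$ represents $Q_i$. First I would apply Cauchy--Schwarz to the sum $\sumstar_{d \mod{q}}$, which has $\phi(q) \le q$ terms, to obtain $|S_q(\bs{c})|^2 \le \phi(q)\,\widetilde{S}_q(\bs{c})$, where $\widetilde{S}_q(\bs{c})$ is a sum over primitive $d$ of the phase $e_q\big(d(F(\bs{x})-F(\bs{y})) + \bs{c}.(\bs{x}-\bs{y})\big)$ ranging over pairs $(\bs{x},\bs{y})$ with $\bs{x}_i,\bs{y}_i \equiv \bs{a}_i \pmod{A_i}$. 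Since the target bound is $(A_1A_2)^n q^{1+\frac{n}{2}}$, it then suffices to prove $\widetilde{S}_q(\bs{c}) \ll (A_1A_2)^{2n} q^{1+n}$.

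Next I would set $\bs{z} = \bs{x}-\bs{y}$ and use the exact identity $F^0(\bs{x}) - F^0(\bs{y}) = F^0(\bs{z}) + \nabla F^0(\bs{z}).\bs{y}$, valid for any quadratic form, together with $\nabla F^0(\bs{z}) = 2M\bs{z}$; note that the constant $-l$ cancels and $\bs{c}.(\bs{x}-\bs{y}) = \bs{c}.\bs{z}$. The phase therefore factors as $e_q\big(dF^0(\bs{z}) + \bs{c}.\bs{z}\big)\,e_q\big(d\,\bs{y}.(2M\bs{z})\big)$, turning the inner sum over $\bs{y}$ into a purely linear exponential sum. At this point I would record the constraint $\bs{z}_i \equiv \0 \pmod{A_i}$, which is forced by $\bs{x}_i \equiv \bs{y}_i \equiv \bs{a}_i$.

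The heart of the argument is orthogonality for the $\bs{y}$-sum. Writing $\bs{y}_i = \bs{a}_i + A_i\bs{u}_i$ with $\bs{u}_i$ running over $(\ZZ/q\ZZ)^{k_i}$, this sum has modulus $q^n$ when $M\bs{z}_1 \equiv \0 \pmod{q/(q,A_1)}$ and $M\bs{z}_2 \equiv \0 \pmod{q/(q,A_2)}$, and vanishes otherwise (the factor $2$ is harmless, and the detached phase $e_q(d\bs{a}.(2M\bs{z}))$ has modulus one). Only these admissible $\bs{z}$ survive, each contributing at most $q^n$; summing over the $\le q$ values of $d$ reduces the problem to counting admissible $\bs{z}$. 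Since $M$ is nonsingular --- the $Q_i$ being positive definite gives $\det M = \pm \det M_1 \det M_2 \ne 0$ --- counting solutions of $M\bs{z}_i \equiv \0 \pmod{q/(q,A_i)}$ subject to $\bs{z}_i \equiv \0 \pmod{A_i}$ yields $O\big((q,A_1^2)^{k_1}(q,A_2^2)^{k_2}\big)$ choices. This gives $\widetilde{S}_q(\bs{c}) \le (q,A_1^2)^{k_1}(q,A_2^2)^{k_2}\,q^{1+n}$, and feeding this back into Cauchy--Schwarz, together with $(q,A_i^2) \le A_i^2$ and $A_1^{k_1}A_2^{k_2} \le (A_1A_2)^n$, delivers the claimed estimate.

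I expect the counting of admissible $\bs{z}$ to be the main obstacle. One must correctly track the powers of $\gcd(q,A_i)$ coming from the interplay of the three conditions --- the range $\bs{z}_i$ modulo $qA_i$, the divisibility $\bs{z}_i \equiv \0 \pmod{A_i}$, and the congruence $M\bs{z}_i \equiv \0 \pmod{q/(q,A_i)}$ --- which is exactly what produces the factor $(q,A_i^2)$ rather than the naive $(q,A_i)$. Nonsingularity of $M$ is the only structural input here, so the bound is insensitive to the specific forms $Q_1,Q_2$. An alternative route would first reduce to prime powers $q = p^t$ through the multiplicativity of Lemma~\ref{multlem} and count solutions $p$-adically, but the direct treatment above is cleaner and already gives the required uniformity in $A_1$, $A_2$ and $q$.
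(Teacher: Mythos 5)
Your proposal is correct and follows essentially the same route as the paper: Cauchy--Schwarz in $d$, the differencing substitution $\bs{z}=\bs{x}-\bs{y}$ with $F^0(\bs{x})-F^0(\bs{y})=F^0(\bs{z})+\nabla F^0(\bs{z}).\bs{y}$, orthogonality in $\bs{y}$ forcing $M\bs{z}_i\equiv\0\pmod{q/(q,A_i)}$, and the count of $O\big((q,A_1^2)^{k_1}(q,A_2^2)^{k_2}\big)$ admissible $\bs{z}$. You in fact supply slightly more detail (the parametrisation $\bs{y}_i=\bs{a}_i+A_i\bs{u}_i$ and the source of the $(q,A_i^2)$ factor) than the paper does.
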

The following is the key result on exponential sums that we shall need, and it is similar to ~\cite[Lemma 28]{HB}.
\begin{lemma}\label{lemma28}
Let $$\delta = \begin{cases} 1 &\mbox{$l = 0$ and $n$ is even,} \\
0 &\mbox{otherwise.}\end{cases}$$
\begin{equation}\label{eq:lnot0}
\sum_{q \leq Z} |S_q(\bs{c})| \ll (A_1A_2)^{2n}Z^{\frac{3+n+\delta}{2}+\ve}X^{\ve},
\end{equation}
\end{lemma}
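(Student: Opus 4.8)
The plan is to exploit the multiplicativity of $S_q(\bs{c})$ furnished by Lemma~\ref{multlem}, reduce to pointwise estimates on prime-power moduli, and recombine by a Rankin-type summation. First I would split $q=q_0q_1$, where $q_0$ is built from the primes dividing $2(\det M)A_1A_2$ and $q_1$ is coprime to it. On the factor $q_0$ I would use only the crude estimate of Lemma~\ref{lemma25} in its refined shape $\widetilde{S}_q\ll (q,A_1^2)^{k_1}(q,A_2^2)^{k_2}q^{1+n}$; once summed, the gcd factors are responsible for the clean power $(A_1A_2)^{2n}$ in the final bound. All the cancellation must therefore be extracted from the factor $q_1$, where the combined form $F^0=Q_1-Q_2$ has good reduction and, as one sees directly from the definition of $S_q(\bs{c})$, the sum no longer depends on $A_1,A_2$ or $\bs{a}_1,\bs{a}_2$.

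For a good prime power $p^k$ I would complete the square in $\bs{x}$ in the inner sum. Since $F^0$ is nonsingular modulo $p$, this factorises the $\bs{x}$-sum into $n$ one-dimensional Gauss sums, producing a factor of modulus $p^{kn/2}$, and collapses the $d$-summation to the one-dimensional sum $\sumstar_{d \bmod p^k}\left(\tfrac{d}{p}\right)^{nk} e_{p^k}\!\left(-dl-\overline{4d}\,B(\bs{c})\right)$, where $B(\bs{c})$ is the value at $\bs{c}$ of the form dual to $F^0$. The quadratic character $\left(\tfrac{d}{p}\right)^{nk}$ is nontrivial precisely when $nk$ is odd. When $nk$ is odd the sum is a Salié sum and satisfies $\ll p^{k/2}$ unconditionally (indeed it vanishes in the fully degenerate case), whence $|S_{p^k}(\bs{c})|\ll p^{k(1+n)/2}$; when $nk$ is even it is a Kloosterman sum, giving the same square-root cancellation unless $p^k\mid l$ and $p^k\mid B(\bs{c})$, in which case it degenerates to a Ramanujan-type sum of size $\ll p^k$ and one only obtains $|S_{p^k}(\bs{c})|\ll p^{k(1+n)/2}\cdot p^{k/2}$.

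I would then multiply these local bounds through Lemma~\ref{multlem} and sum over $q\le Z$, the key point being that degeneration requires $nk$ even. If $n$ is odd this forces $k$ even, so the extra losses are confined to the square part of $q$; a Rankin bound shows that these sparse contributions still only amount to $Z^{(3+n)/2+\ve}$, giving $\delta=0$. If $n$ is even, degeneration is possible at every exponent, but it additionally requires $p^k\mid l$: for $l\neq0$ this restricts the losses to the prime powers dividing $l$, which is a divisor-bounded set accounting for the factor $X^\ve$, and again yields the exponent $(3+n)/2$; for $l=0$ the divisibility $p^k\mid l$ holds at every prime, so for the worst $\bs{c}$ one pays a uniform factor $q^{1/2}$ throughout, replacing the exponent by $(3+n+1)/2$ and producing exactly the loss recorded by $\delta=1$.

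The main obstacle will be the explicit evaluation of $S_{p^k}(\bs{c})$ at the higher prime powers $k\ge2$ and at the bad primes $p\mid 2(\det M)A_1A_2$, where completing the square is no longer clean, the variables range modulo $p^kA_i$ subject to the congruences modulo $A_i$, and one must track the $p$-adic valuation of $B(\bs{c})$ to decide when the $d$-sum degenerates. Carrying this out uniformly in $\bs{c}$, so that no $\bs{c}$-dependence survives in the final estimate, and uniformly in $l$, while isolating a clean power of $A_1A_2$, is the delicate bookkeeping on which the whole bound rests.
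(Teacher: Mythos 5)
Your proposal is correct in outline and follows essentially the same route as the paper, which factors $q=u_1u_2v$ (squarefree coprime to $A_1A_2$, squarefree dividing $A_1A_2$, squarefull), quotes Heath-Brown's Lemma 26 for the square-root cancellation with the gcd factor $(l,M^{-1}(\bs{c}),u_1)^{1/2}$ on the good squarefree part, uses the crude Lemma~\ref{lemma25} elsewhere, and feeds these into the summation of Heath-Brown's Lemma 28 --- precisely the Gauss/Sali\'e/Kloosterman analysis and Rankin-type summation you reconstruct by hand. Two minor points: the paper dispatches the case $l=0$, $n$ even entirely by the crude bound of Lemma~\ref{lemma25}, since $\sum_{q\le Z}q^{1+n/2}\ll Z^{(4+n)/2}$ already matches the exponent with $\delta=1$; and your all-or-nothing dichotomy for the degeneration of the $d$-sum should be replaced by the graded bound $\ll p^{k/2}(p^k,l,B(\bs{c}))^{1/2}$, which is what your concluding divisor sum over $e\mid l$ implicitly uses.
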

\begin{proof}
When $l=0$ and $n$ is even, the result follows from Lemma ~\ref{lemma25}. When $l \neq 0$, or $n$ is odd, we factorise $q=u_1u_2v$ such that $(u_1,u_2)=1$, $u_1$ and $u_2$ are square-free and $v$ is square-full and $(u_1u_2,v) = 1$,  $(u_1,A_1A_2)=1$. Then we have by Lemma ~\ref{multlem} that
\begin{equation*}
S_q(\bs{c}) = S_{u_1}(\overline{u_2v}\bs{c})S_{u_2}(\overline{u_1v}\bs{c})S_v(\overline{u_1u_2}\bs{c}).
\end{equation*}
Let $M$ be the matrix that corresponds to the quadratic form $F^{0}(\bs{x}) = Q_1(\bs{x}_1)-Q_2(\bs{x}_2)$. Let $M^{-1}(\bs{x})$ denote the quadratic form that corresponds to the matrix $M^{-1}$, which is well-defined modulo $p$ if $p \nmid \det M$. We use the bounds,
\begin{equation*}
\begin{split}
S_{u_1}(\overline{u_2v}\bs{c}) &\ll C^{\omega(u_1)}u_1^{\frac{1+n}{2}}(l,M^{-1}(\bs{c}),u_1)^{\frac{1}{2}}, \\
S_v(\overline{u_1u_2}\bs{c}) &\ll A_1^{k_1}A_2^{k_2}v^{1+\frac{n}{2}},  \\
\end{split}
\end{equation*}
and
\begin{equation*}
\begin{split}
S_{u_2}(\overline{u_1v}\bs{c}) &\ll  A_1^{\frac{k_1}{2}}A_2^{\frac{k_2}{2}}(A_1A_2)^{\frac{1}{4}}u_2^{\frac{1+n}{2}}.
\end{split}
\end{equation*}
The first bound follows from ~\cite[Lemma 26]{HB}, where $C$ is a constant that depends only on $\det M$. The last bound follows from the trivial bound, $S_p(\bs{c}) \ll p^{1+n}$, and by noticing that if $p \mid u_2$ then $p \mid A_1A_2$. Inserting these bounds into the proof of ~\cite[Lemma 28]{HB} we obtain ~\eqref{eq:lnot0}. 
\end{proof}

\subsection{Estimates for exponential integrals I}
Let
\begin{equation}\label{eq:testtest}
w_0(x) = \begin{cases} \exp(-(1-x^2)^{-1}), &\mbox{$|x| < 1$} \\
0 &\mbox{$|x| \geq 1,$}
\end{cases}
\end{equation}
and let $\omega(x) = w_0\left(\frac{x}{6n(\|Q_1\|+\|Q_2\|)}\right)$, where $\|Q\|$ denotes the norm of a quadratic form $Q$, which is the largest coefficient of $Q$ in absolute value. Let $$z(\bs{x}) = \frac{YQ_1(\bs{x}_1)-XQ_2(\bs{x}_2)-l}{Y}.$$ Then $\omega(z(\bs{x})) \gg 1$ whenever $\bs{x} \in \supp(w)$. We have
\begin{equation*}
I_q(\bs{c}) = \int_{\mathbf{R}^{n}} w_3(\bs{x}) f(z(\bs{x}))e(-\bs{u}.\bs{x}) \, d\bs{x},
\end{equation*} 
where $f(y) = h(r,y)\omega(y),$ and $w_3(\bs{x}) = \frac{w(\bs{x})}{\omega(z(\bs{x}))}.$ Observe that $f(y)$ has compact support. Let $r=q/Q$, then by ~\cite[Lemma 17]{HB} we have the following bound for the Fourier transform of $f$,
\begin{equation}\label{eq:prt}
\begin{split}
p(t) &= p_r(t) = \int_{\mathbf{R}} f(r,y)\omega(y) e(-ty) \, dy \ll_j (r|t|)^{-j}.
\end{split}
\end{equation}
This bound shows that $p(t)$ has polynomial growth in $r$ (recall that $r \ll 1$) if $|t| \gg r^{-1-\ve}$. 

Let $\bs{u}_1 = \frac{\bs{c}_1}{qA_1/\sqrt{Y}}$, and $\bs{u}_2 =  \frac{\bs{c}_2}{qA_2/\sqrt{X}}$.  By Fourier inversion we see that
\begin{equation}\label{eq:iutt}
\begin{split}
I_q(\bs{c}) &= \int_{\mathbf{R}} p(t)e(-tl/Y) I(\bs{u},t) dt,
\end{split}
\end{equation}
where
\begin{equation*}
I(\bs{u},t) = \int_{\mathbf{R}^{n}} w_3(\bs{x}) e\left(tQ_1(\bs{x}_1)-t\frac{X}{Y}Q_2(\bs{x}_2) - \bs{u}.\bs{x}\right) \, d\boldsymbol{x}.
\end{equation*}
The key result in this section is
\begin{lemma}\label{restrictrangeprop}
Let $\ve > 0$ be fixed and let $\bs{c} \neq \bs{0}$. Assume that $\|w\|_{1,1}\gg 1$ and that $\|w\|_{N,1} \ll_N \|w\|_{1,1}^N.$ Then we have 
\begin{equation*}
I_q(\boldsymbol{c}) \ll_A X^{-A},
\end{equation*}
if $\frac{|\boldsymbol{c_1}|}{A_1} \gg \|w\|_{1,1}X^{\ve}$ or $\frac{|\boldsymbol{c_2}|\sqrt{X}}{\sqrt{Y}A_2} \gg \|w\|_{1,1}X^{\ve}$. 
\end{lemma}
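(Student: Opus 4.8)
The plan is to prove this by non‑stationary phase (repeated integration by parts) in the spatial variables, exploiting that the linear term $\bs{u}.\bs{x}$ in the phase of $I(\bs{u},t)$ is large and, over a suitable range of $t$, cannot be cancelled by the quadratic terms. First I would unify the two hypotheses. Writing $\bs{u}_1=\bs{c}_1\sqrt{Y}/(qA_1)$ and $\bs{u}_2=\bs{c}_2\sqrt{X}/(qA_2)$, and recalling that $h(q/Q,\cdot)$ forces $q\ll Q=\sqrt{Y}$, so that $r^{-1}=Q/q\le Q=Y^{1/2}$, the hypothesis $|\bs{c}_1|/A_1\gg\|w\|_{1,1}X^{\ve}$ gives $|\bs{u}_1|\ge|\bs{c}_1|/A_1\gg\|w\|_{1,1}X^{\ve}$, and likewise $|\bs{c}_2|\sqrt{X}/(\sqrt{Y}A_2)\gg\|w\|_{1,1}X^{\ve}$ gives $|\bs{u}_2|\gg\|w\|_{1,1}X^{\ve}$. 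By the symmetry between the two blocks of variables I may assume the first case and integrate by parts in $\bs{x}_1$; the other case is identical (the factor $X/Y\le 1$ only helps).

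Starting from $I_q(\bs{c})=\int_{\mathbf{R}}p(t)e(-tl/Y)I(\bs{u},t)\,dt$, the phase of $I(\bs{u},t)$ is $\psi(\bs{x})=tQ_1(\bs{x}_1)-t\frac{X}{Y}Q_2(\bs{x}_2)-\bs{u}.\bs{x}$, whose $\bs{x}_1$‑gradient is $\nabla_{\bs{x}_1}\psi=2tM_1\bs{x}_1-\bs{u}_1$, where $M_1$ is the matrix of $Q_1$. On $\supp w_3\subseteq\supp w$ the variable $\bs{x}_1$ is bounded, so $|2tM_1\bs{x}_1|\ll|t|$; hence there is a constant $C$, depending on $Q_1$ and $\supp w$, with $|\nabla_{\bs{x}_1}\psi|\ge|\bs{u}_1|/2$ whenever $|t|\le V:=|\bs{u}_1|/(2C)$. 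I would split the $t$‑integral at $|t|=V$ accordingly.

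In the range $|t|\le V$ the gradient is $\gg|\bs{u}_1|$ while the Hessian $\nabla^2\psi=2tM_1$ is $\ll|t|\ll|\bs{u}_1|$, so each integration by parts gains a clean factor $\ll1/|\bs{u}_1|$ and the Hessian contributions are harmless. After $N$ steps the standard non‑stationary‑phase estimate gives $I(\bs{u},t)\ll_N\|w_3\|_{N,1}|\bs{u}_1|^{-N}$. Here the construction of $\omega$ (with the factor $6n(\|Q_1\|+\|Q_2\|)$) guarantees $\omega(z(\bs{x}))\gg1$ and that $1/\omega(z)$ has bounded derivatives on $\supp w$, so $\|w_3\|_{N,1}\ll_N\|w\|_{N,1}\ll_N\|w\|_{1,1}^N$; since $|\bs{u}_1|\gg\|w\|_{1,1}X^{\ve}$ this yields $I(\bs{u},t)\ll_N X^{-N\ve}$. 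Multiplying by the bound $\|p\|_1\ll r^{-1}\ll Y^{1/2}$, immediate from ~\eqref{eq:prt} on taking $j=0$ and $j=2$, controls this range by $\ll X^{-N\ve}Y^{1/2}$.

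In the complementary range $|t|>V$ I would bound $I$ trivially, $|I(\bs{u},t)|\ll\|w_3\|_1\ll\|w\|_{1,1}$, and use the rapid decay $p(t)\ll_j(r|t|)^{-j}$ of ~\eqref{eq:prt}. Since $V\asymp|\bs{u}_1|=r^{-1}|\bs{c}_1|/A_1\gg r^{-1}X^{\ve}$, one has $rV\gg X^{\ve}$, whence $\int_{|t|>V}(r|t|)^{-j}\,dt\ll_j r^{-1}(rV)^{1-j}\ll Y^{1/2}X^{-(j-1)\ve}$. Collecting the two ranges and absorbing the polynomial factors $Y^{1/2},\|w\|_{1,1}\ll X^{O(1)}$ by choosing $N$ and $j$ large in terms of $A$ and $\ve$ gives $I_q(\bs{c})\ll_A X^{-A}$, as claimed. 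The main obstacle is the first range: performing integration by parts against a genuinely quadratic phase while keeping the full gain $1/|\bs{u}_1|$ per step, which is precisely what forces the split at $|t|\asymp|\bs{u}_1|$ (so the Hessian never dominates the gradient) and which requires the uniform Sobolev control $\|w_3\|_{N,1}\ll_N\|w\|_{1,1}^N$ furnished by the tailored cutoff $\omega$.
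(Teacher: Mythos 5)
Your proposal is correct and follows essentially the same route as the paper: the paper splits the $t$-integral at $|t|\asymp|\bs{u}|$, invokes Heath-Brown's Lemma~10 (whose content is exactly your repeated non-stationary-phase integration by parts, with the Sobolev norms of $w_3$ tracked) to get $I(\bs{u},t)\ll_M\|w\|_{M,1}|\bs{u}|^{-M}$ for $|t|\ll|\bs{u}|$, and uses the decay $p(t)\ll_j(r|t|)^{-j}$ on the complementary range. Your only deviation — integrating by parts solely in the block of variables where the hypothesis holds — is an inessential variant of the same argument.
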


\begin{proof}
Modifying the proof of ~\cite[Lemma 10]{HB} to keep track of any dependency on $w$, we have for $M > 0$ that
\begin{equation}\label{eq:lemma10hb}
I(\bs{u},t) \, \ll_{M} \|w\|_{M,1}|\bs{u}|^{-M},
\end{equation}
when $|t| \ll |\bs{u}|$. Using ~\eqref{eq:prt} when $|t| \gg |\bs{u}|$, we get by ~\eqref{eq:iutt} that
\begin{equation*}
I_q(\bs{c}) \ll_M \|w\|_{M,1}r^{-1}|\bs{u}|^{-M} + r^{-M}|\bs{u}|^{1-M}.
\end{equation*}
If $|\bs{u}| \gg r^{-1}\|w\|_{1,1}X^{\ve}$, we see that
\begin{equation*}
I_q(\bs{c}) \ll_A X^{-A}.
\end{equation*}
This completes the proof.
\end{proof}
\subsection{Estimates for exponential integrals II}
By Proposition ~\ref{restrictrangeprop} we have arbitrary polynomial decay for $I_q(\bs{c})$ unless $|\bs{u}_1|\leq r^{-1}\|w\|_{1,1}X^{\ve}$, and $|\bs{u}_2| \ll r^{-1}\|w\|_{1,1}X^{\ve}.$ To estimate the integral in this range, we need the following 
\begin{lemma}\label{smalluprop}
Let $\ve > 0$ and $\bs{c} \neq \bs{0}$. If $\bs{c}_1=\bs{0}$. Then we have
\begin{equation}\label{eq:c10}
I_q(\bs{c}) \ll (r^{-1}|\bs{u}_2|)^{\ve}\|w\|_{1,1}^{\frac{n}{2}}r^{-1}|\bs{u}_2|^{-\frac{n}{2}}.
\end{equation}
Suppose $\bs{c}_2 = \bs{0}$, and let $\ve > 0$. Then we have
\begin{equation}\label{eq:c20}
I_q(\bs{c}) \ll (r^{-1}|\bs{u}_1|)^{\ve}\|w\|_{1,1}^{\frac{n}{2}}r^{-1}|\bs{u}_1|^{-\frac{n}{2}}.
\end{equation}
If $\bs{c}_1, \bs{c}_2 \neq \bs{0}$, we have
\begin{equation}\label{eq:cnot0}
I_q(\bs{c}) \ll (r^{-1}|\bs{u}|)^{\ve}\|w\|_{1,1}^{\frac{n}{2}}r^{-1}|\bs{u}_1|^{-\frac{k_1}{2}}|\bs{u}_2|^{-\frac{k_2}{2}} 
\end{equation}
\end{lemma}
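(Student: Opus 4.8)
The plan is to work from the Fourier-inversion identity \eqref{eq:iutt}, which expresses $I_q(\bs{c})$ as a one-dimensional integral over $t$ of $p(t)e(-tl/Y)I(\bs{u},t)$, and to estimate the inner oscillatory integral $I(\bs{u},t)$ by stationary phase before performing the $t$-integration. The whole point of passing through \eqref{eq:iutt} is that the $r$-scale of $f$ is moved into the $t$-variable, so that $I(\bs{u},t)$ has a genuinely smooth amplitude $w_3$ and a clean quadratic phase.

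First I would treat $I(\bs{u},t)$, whose phase $tQ_1(\bs{x}_1)-t\tfrac{X}{Y}Q_2(\bs{x}_2)-\bs{u}.\bs{x}$ is a nondegenerate quadratic form in $\bs{x}$. Diagonalising $Q_1$ and $Q_2$ over $\mathbf{R}$ (which preserves the hypothesis $\|w\|_{N,1}\ll_N\|w\|_{1,1}^N$ up to constants depending on the $Q_i$) and applying the one-dimensional second-derivative test in each of the $n$ variables, in the spirit of the exponential-integral estimates of \cite{HB}, one finds the stationary point $\bs{x}_1^{*}=\tfrac{1}{2t}M_1^{-1}\bs{u}_1$, $\bs{x}_2^{*}=-\tfrac{Y}{2tX}M_2^{-1}\bs{u}_2$ and the bound
\[
I(\bs{u},t)\ll \|w\|_{1,1}^{n/2}\,|t|^{-k_1/2}\,|tX/Y|^{-k_2/2}\qquad(|t|\gg 1),
\]
with rapid decay unless both stationary points lie in $\supp w_3$, i.e. unless $|t|\gg|\bs{u}_1|$ and $|t|\gg|\bs{u}_2|Y/X$; the exponent $n/2$ records the Sobolev cost of the $n$ one-dimensional estimates, the range $|t|\ll 1$ contributes only a lower-order term, and the value $-\tfrac{1}{4t}M_1^{-1}(\bs{u}_1)+\tfrac{Y}{4tX}M_2^{-1}(\bs{u}_2)$ of the phase at the stationary point must be retained for the next step.

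Next I would insert this into the $t$-integral. Since $p(t)\ll 1$ and $p(t)\ll_j(r|t|)^{-j}$, the weight $p$ is effectively supported on $|t|\ll r^{-1}$, so together with the support constraint the integration runs over $T_0\ll|t|\ll r^{-1}$, where $T_0=\max(|\bs{u}_1|,|\bs{u}_2|Y/X)$. When $\bs{c}_1=\bs{0}$ the $\bs{x}_1$-block has its stationary point at the origin for every $t$, so the constraint is only $|t|\gg|\bs{u}_2|Y/X$; the extra free decay $|t|^{-k_1/2}$ then makes $\int_{T_0}^{r^{-1}}|t|^{-n/2}\,dt\ll T_0^{1-n/2}$ collapse, after simplifying the powers of $X/Y$, to \eqref{eq:c10}, and symmetrically to \eqref{eq:c20}. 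When $\bs{c}_1,\bs{c}_2\neq\bs{0}$ the same endpoint analysis produces $T_0^{1-n/2}$, which I would rewrite as the product $|\bs{u}_1|^{-k_1/2}|\bs{u}_2|^{-k_2/2}$ by invoking the bound $|\bs{u}|\ll r^{-1}\|w\|_{1,1}X^{\ve}$ of Lemma \ref{restrictrangeprop} to supply the leftover factor $r^{-1}$, the loss at the dyadic boundary between the stationary and non-stationary regimes being absorbed into the factor $(r^{-1}|\bs{u}|)^{\ve}$.

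The hard part will be that this crude endpoint bound is clean only when $l=0$ (so $Y=X$): for $l\neq 0$ it loses a power of $Y/X$ precisely in the lopsided regimes where the slowly oscillating block $\bs{x}_2$, scaled by $X^{1/2}<Y^{1/2}$, controls the estimate. To recover the stated bound uniformly in $l$ one must exploit the oscillation of the $t$-integral itself. Its phase $-tl/Y-E/4t$, with $E=M_1^{-1}(\bs{u}_1)-\tfrac{Y}{X}M_2^{-1}(\bs{u}_2)$, has, for $l\neq 0$, a stationary point near $t_0\sim\sqrt{EY/l}$; applying stationary phase there, while disposing of the complementary range by repeated integration by parts and dyadic decomposition, upgrades the endpoint estimate to the required product form. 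Keeping every estimate uniform in the shift $l$ and in the lopsidedness of $X$ against $Y$ is exactly the ``lopsided weight'' difficulty advertised in the introduction, and is where the bulk of the work lies.
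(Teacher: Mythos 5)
Your first two paragraphs reproduce the paper's strategy: the paper quotes the second-derivative-test bound $I(\bs{u},t)\ll\prod_{i\leq k_1}\min(1,|t|^{-1/2})\prod_{j\leq k_2}\min(1,(|t|X/Y)^{-1/2})$ from ~\cite[Lemma 3.1]{HBP} rather than re-deriving it, pairs it with the integration-by-parts bound $I(\bs{u},t)\ll_N\|w\|_{N,1}|\bs{u}_2|^{-N}$ valid for $|t|\ll\frac{Y}{X}|\bs{u}_2|$ (resp.\ $|\bs{u}_1|^{-N}$ for $|t|\ll|\bs{u}_1|$), splits the $t$-integral at exactly your threshold $T_0$, and uses $\int|p(t)|\,dt\ll r^{-1}$ rather than integrating $|t|^{-n/2}$ to an endpoint; the case $\bs{c}_1,\bs{c}_2\neq\bs{0}$ is then obtained by comparing $|\bs{u}_1|$ with $|\bs{u}_2|$, as you do. Two small points you gloss over: the factor $\|w\|_{1,1}^{n/2}$ does not come from the second-derivative test but from a dichotomy — when $|\bs{u}_2|\ll r^{-2\ve/n}\|w\|_{1,1}$ the claimed bound is weaker than the trivial bound $I_q(\bs{c})\ll1$ (from ~\cite[Lemma 15]{HB}) and there is nothing to prove, and only in the complementary range does one choose $N$ large in the rapid-decay term; this case split is a needed step, not a "lower-order term."

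The genuine problem is your final paragraph. The difficulty you describe does not exist, and the fix you propose would not work. Tracking the powers of $X/Y$ in the stationary regime: for $|t|\gg\frac{Y}{X}|\bs{u}_2|$ one has
\begin{equation*}
|t|^{-\frac{k_1}{2}}\bigl(|t|X/Y\bigr)^{-\frac{k_2}{2}}\;=\;|t|^{-\frac{n}{2}}(Y/X)^{\frac{k_2}{2}}\;\ll\;\bigl(\tfrac{Y}{X}|\bs{u}_2|\bigr)^{-\frac{n}{2}}(Y/X)^{\frac{k_2}{2}}\;=\;(X/Y)^{\frac{k_1}{2}}|\bs{u}_2|^{-\frac{n}{2}},
\end{equation*}
so the lopsidedness contributes the factor $(X/Y)^{k_1/2}\leq1$, which \emph{helps} and is simply discarded; multiplying by $\int|p(t)|\,dt\ll r^{-1}$ already gives ~\eqref{eq:c10} uniformly in $l$, with $e(-tl/Y)$ bounded trivially. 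No oscillation in $t$ is used anywhere in the paper's proof — and none is available to you: the only information on $p(t)$ is the magnitude bound ~\eqref{eq:prt} inherited from ~\cite[Lemma 17]{HB}; its argument is not controlled, so a stationary-phase analysis of the phase $-tl/Y-E/4t$ against the weight $p(t)$ cannot be carried out as described. In short, paragraphs one and two essentially complete the proof, and the "bulk of the work" you defer to paragraph three is both unnecessary and not executable with the estimates at hand. (The genuinely lopsided difficulty advertised in the introduction is resolved by the asymmetric thresholds $|t|\ll\frac{Y}{X}|\bs{u}_2|$ versus $|t|\ll|\bs{u}_1|$ and the favourable sign of the resulting power of $X/Y$, not by extra oscillation.)
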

\begin{proof}
We begin by recording the trivial bound, $I_q(\bs{c}) \ll 1$, which follows from ~\cite[Lemma 15]{HB}. Next, using the fact that $w$ is compactly supported, we may write 
\begin{equation*}
\begin{split}
I(\bs{u},t) &= \int_{\mathbf{R}^{k_1}}e(tQ_1(\bs{x}_1)-\bs{u}_1.\bs{x_1})\times \\ &\quad \quad  \int_{\mathbf{R}^{k_2}}w_3(\bs{x})e\left(-t\frac{X}{Y}Q_2(\bs{x}_2)-\bs{u}_2.\bs{x}_2\right) \, d\bs{x}_2 \, d\bs{x}_1.
\end{split}
\end{equation*} 
Integrating trivially over $\bs{x}_1$, and estimating the integral over $\bs{x}_2$ by ~\eqref{eq:lemma10hb}, we get that 
\begin{equation*}\label{eq:x1first}
I(\bs{u},t) \ll_N \|w\|_{N,1}|\bs{u}_2|^{-N}
\end{equation*} 
if $|t| \ll \frac{Y}{X}|\bs{u}_2|$. Arguing similarly with the roles of $\bs{x}_1$ and $\bs{x_2}$ interchanged, we also have the bound
\begin{equation*}\label{eq:x2 first}
I(\bs{u},t) \ll_N \|w\|_{N,1}|\bs{u}_1|^{-N}
\end{equation*}
if $|t| \ll |\bs{u}_1|$. Finally, we record the following bound from ~\cite[Lemma 3.1]{HBP},
\begin{equation}\label{eq:hbp}
I(\bs{u},t) \ll \begin{cases} |w\|_{N,1}|\bs{u}|^{-N}\ &\mbox{$|\bs{u}| \gg |t|$}\\
\prod_{i=1}^{k_1}\min\left(1,\left(|t|\right)^{-\frac{1}{2}}\right)\prod_{j=1}^{k_2}\min\left(1,\left(|t|\frac{X}{Y}\right)^{-\frac{1}{2}}\right) &\mbox{$\forall t \in \mathbf{R}.$}
\end{cases}
\end{equation}
In addition to the dependence on the quadratic forms $Q_i$, the implied constant for the first bound depends on $N$, and for the second bound the dependence is also on the $L^{1}$ norm of the weight function $w$. The above bounds are sufficient to prove the lemma. We also remark that since $w$ is assumed to be compactly supported away from the origin, we see that $\|w\|_{1,1} \gg 1$. 

Suppose first that $\bs{u}_1=\bs{0}$. We have by ~\eqref{eq:iutt} and ~\eqref{eq:hbp} that
\begin{equation}\label{eq:manavinala}
\begin{split}
I_q(\bs{c}) &\ll \int_{|t| \ll \frac{Y}{X}|\bs{u}_2|}|p(t)|\|w\|_{N,1}|\bs{u}_2|^{-N} \, dt + \int_{|t| \gg \frac{Y}{X}|\bs{u}_2|} |p(t)|\left(\frac{X}{Y}\right)^{\frac{k_1}{2}}|\bs{u}_2|^{-\frac{n}{2}} \, dt \\
&\ll r^{-1}\|w\|_{N,1}|\bs{u}_2|^{-N} + r^{-1}\left(\frac{X}{Y}\right)^{\frac{k_1}{2}}|\bs{u}_2|^{-\frac{n}{2}}.
\end{split}
\end{equation} 
Here we have used the fact that $\textstyle\int_{-\infty}^{\infty}|p(t)| \, dt \ll r^{-1}$. If $|\bs{u}_2| \gg r^{-2\ve/n}\|w\|_{1,1}$, using the fact that $\|w\|_{N,1} \ll \|w\|_{1,1}^N$, and by choosing $N$ large enough we get that $$r^{-1}\|w\|_{N,1} |\bs{u}_2|^{-N} \ll_N r^{-1+N\ve} \ll r^{-1}|\bs{u}_2|^{-\frac{n}{2}}.$$ If $|\bs{u}_2| \ll r^{-2\ve/n}\|w\|_{1,1}$, observe that $$|\bs{u}_1|^{\frac{n}{2}-\ve} \ll \|w\|_{1,1}^{\frac{n}{2}}r^{-\ve}.$$ As a result, we have that $$1 \ll (r^{-1}|\bs{u}_2|)^{\ve}\|w\|_{1,1}^{\frac{n}{2}}r^{-1}|\bs{u}_2|^{-\frac{n}{2}}.$$ Since $$I_q(\bs{c}) \ll 1 \ll (r^{-1}|\bs{u}_2|)^{\ve}\|w\|_{1,1}^{\frac{n}{2}}r^{-1}|\bs{u}_2|^{-\frac{n}{2}},$$ this completes the proof of ~\eqref{eq:c10}. The proof of ~\eqref{eq:c20} follows from an analogous argument, replacing $\bs{u}_2$ by $\bs{u}_1$.

Finally, consider the case when $|\bs{u}_1|$ and $|\bs{u}_2|$ are both non-zero. The proof of ~\eqref{eq:cnot0} follows from combining ~\eqref{eq:c10} and ~\eqref{eq:c20} - observe first that these bounds hold even if $\bs{c}_1 \neq \bs{0}$, $\bs{c}_2 \neq \bs{0}$, respectively. If $|\bs{u}_1| \ll |\bs{u}_2|$, we use ~\eqref{eq:c10} and the fact that $|\bs{u}_2|^{-\frac{k_1}{2}} \ll |\bs{u}_1|^{-\frac{k_1}{2}}.$ If $|\bs{u}_2| \ll |\bs{u}_1|$ we use ~\eqref{eq:c20}, and this completes the proof of the lemma.
\end{proof}
\subsection{Evaluating $I_q(\bs{0})$}
Recall that
\begin{equation*}
I_q(\bs{0}) = \int_{\mathbf{R}^{n}} w(\bs{x})h\left(r, Q_1(\bs{x}_1)-\frac{XQ_2(\bs{x}_2)+l}{Y}\right) d\bs{x}.
\end{equation*} 
We show that the following holds,
\begin{lemma}\label{infty}
If $q \ll Q$, we have for all $N \geq 1$ that
\begin{equation*}
\begin{split}
I_q(\boldsymbol{0}) &= c_{\infty}(w,l) + O_N(\|w\|_{N,1}r^N),
\end{split}
\end{equation*}
where
$$c_{\infty}(w,l) = \lim_{\kappa \to 0} \frac{1}{2\kappa} \int_{|Q_1(\boldsymbol{x})-\frac{XQ_2(\boldsymbol{y})+l}{Q^2}|\leq \kappa}w(\boldsymbol{x}) \, d\boldsymbol{x}.$$
\end{lemma}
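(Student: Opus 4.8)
The plan is to collapse the $n$-dimensional integral to a one-dimensional integral against the delta-like kernel $h(r,\cdot)$ by a change of variables, and then to exploit the fact that $h(r,\cdot)$ behaves like an approximate identity \emph{to all orders}. Write $z(\bs x)=Q_1(\bs x_1)-\tfrac{XQ_2(\bs x_2)+l}{Y}$, so that $I_q(\bs 0)=\int_{\mathbf R^n}w(\bs x)h(r,z(\bs x))\,d\bs x$ (recall $Q^2=Y$). Since $\nabla z(\bs x)=\bigl(2M_1\bs x_1,\,-\tfrac{2X}{Y}M_2\bs x_2\bigr)$ vanishes only at $\bs x=\bs 0$, and $w$ is compactly supported away from the origin (as noted in the proof of Lemma~\ref{smalluprop}), the map $z$ restricts to a submersion on $\supp(w)$ with $|\nabla z|$ bounded below there. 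The coarea formula then gives
\[
I_q(\bs 0)=\int_{\mathbf R}h(r,y)\,\psi(y)\,dy,\qquad
\psi(y)=\int_{z^{-1}(y)}\frac{w}{|\nabla z|}\,d\mathcal H^{n-1},
\]
where $\psi$ is smooth and compactly supported, since $\supp(\psi)\subseteq z(\supp w)$ is compact.

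Next I would identify the constant. Applying the coarea formula to the indicator of $\{|z|\le\kappa\}$ gives $\tfrac1{2\kappa}\int_{|z(\bs x)|\le\kappa}w\,d\bs x=\tfrac1{2\kappa}\int_{-\kappa}^{\kappa}\psi(y)\,dy$, which tends to $\psi(0)$ as $\kappa\to0$ by continuity of $\psi$ at $0$ (valid because $\nabla z\neq\bs 0$ on $z^{-1}(0)\cap\supp w$). Hence $\psi(0)=c_\infty(w,l)$.

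The analytic heart is the approximate-identity estimate $\int_{\mathbf R}h(r,y)\psi(y)\,dy=\psi(0)+O_N(r^N\|\psi\|_{N,1})$, which I would prove on the Fourier side. By Fourier inversion $\int h(r,y)\psi(y)\,dy=\int_{\mathbf R}\widehat\psi(t)\,\widehat{h_r}(-t)\,dt$ while $\psi(0)=\int_{\mathbf R}\widehat\psi(t)\,dt$, so it suffices to bound $\int\widehat\psi(t)(\widehat{h_r}(-t)-1)\,dt$. Here one uses Heath-Brown's properties of $h$ as a high-accuracy model for the Dirac delta: that $\widehat{h_r}(t)=1+O_N(r^N)$ for $|t|\ll r^{-1}$, and $\widehat{h_r}(t)\ll_j(r|t|)^{-j}$ for $|t|\gg r^{-1}$ (the latter analogous to \eqref{eq:prt} and following from \cite[Lemma 17]{HB}), together with the rapid decay $\widehat\psi(t)\ll_N\|\psi\|_{N,1}(1+|t|)^{-N}$. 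Splitting the $t$-integral at $|t|=r^{-1}$, the low-frequency range contributes $O_N(r^N\|\psi\|_{N,1})$ and the high-frequency range is negligible.

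Finally I would record $\|\psi\|_{N,1}\ll_N\|w\|_{N,1}$. Each $y$-derivative of the pushforward density produces normal derivatives of $w$ together with geometric factors built from $z$ and $|\nabla z|^{-1}$, all bounded on $\supp(w)$ by constants depending only on $Q_1,Q_2$; integrating in $y$ and applying the coarea formula in reverse then bounds $\int|\psi^{(j)}|\,dy$ by $\|w\|_{j,1}$. Combining the three displays yields the lemma. I expect the main obstacle to be the approximate-identity estimate with the \emph{full} saving $r^N$: viewing $h(r,\cdot)$ crudely as a bump of width $r$ and height $r^{-1}$ only yields an $O(r)$ error, and reaching $O(r^N)$ genuinely requires the high-order flatness of $\widehat{h_r}$ near the origin, which is the defining feature of the $\delta$-method kernel. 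A secondary point is tracking the dependence on $\|w\|_{N,1}$ uniformly through the change of variables, where one must keep the geometric factor $|\nabla z|^{-1}$ controlled on the support of $w$ (equivalently, away from the degenerate locus $\bs x_1=\bs x_2=\bs 0$, as holds in the applications).
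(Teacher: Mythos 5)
Your route---coarea slicing of $w$ along the level sets of $z$, followed by a Fourier-side approximate-identity argument for $h(r,\cdot)$---is genuinely different from the paper's, which instead writes $w(\bs x)=\delta^{-n}\int w_{\delta}\bigl(\frac{\bs x-\bs y}{\delta},\bs y\bigr)\,d\bs y$, applies Heath-Brown's Lemmas 9, 12 and 13 to each localized product bump, and integrates over $\bs y$ (whence the extra $\delta^{-n}$ in the paper's error term, later absorbed by the choice $\delta=P^{-1/2}$ and by taking $N$ large). As written, however, your argument has a genuine gap at its analytic heart: the input $\widehat{h_r}(t)=1+O_N(r^N)$ for $|t|\ll r^{-1}$ is not \eqref{eq:prt}, is nowhere established, and is not trivial---the crude estimate $|\widehat{h_r}(t)-\widehat{h_r}(0)|\le|t|\int|h(r,y)||y|\,dy$ only yields $O(|t|)$, which is useless at $|t|\asymp r^{-1}$. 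Worse, the window $r^{-1-\ve}\ll|t|\ll r^{-1}$ is exactly where \eqref{eq:prt} also gives nothing, so splitting at $|t|=r^{-1}$ leaves a range uncovered. The repair is to split at $|t|=r^{-1+\ve}$, use the rapid decay of $\widehat\psi$ against the trivial bound $\widehat{h_r}\ll r^{-1}$ on the outer range, and on the inner range derive the flatness of $\widehat{h_r}$ from \cite[Lemmas 9 and 12]{HB} applied to $y\mapsto e(-ty)$ times a cutoff---but at that point you are re-importing precisely the physical-space approximate identity that your Fourier argument was meant to replace.

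There is a second, quantitative gap in the coarea step. To get $\|\psi\|_{N,1}\ll_N\|w\|_{N,1}$ with constants depending only on $N$ and the forms, you need $|\nabla z|$ bounded below on $\supp(w)$ uniformly in $X,Y,l$; but $\nabla z=\bigl(2M_1\bs x_1,\,-\tfrac{2X}{Y}M_2\bs x_2\bigr)$, so in the lopsided regime $l\gg X$ (where $X/Y\to0$) the relevant degenerate locus is the whole subspace $\{\bs x_1=\bs 0\}$, not the single point $\bs x=\bs 0$ that your parenthetical excludes. The supports arising in the applications, built from $\alpha(|\bs x_1|_2^2)$ with $\alpha$ supported in $[0,2]$, are bounded away from the origin but not from $\{\bs x_1=\bs 0\}$ by any distance independent of $P$, so the pushforward density $\psi$ and its derivatives pick up powers of $Y/X$ (or of $P$) unless you excise the near-degenerate region separately---for instance by noting that near $\{\bs x_1=\bs 0\}$ one has $z\le-(1-X/Y)+o(1)$, hence $|z|\gg1$ when $l\gg X$, a region where $h(r,\cdot)$ contributes negligibly. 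This case analysis is exactly the ``lopsided weight'' difficulty the paper flags in its introduction, and it is absent from your write-up; without it the stated error term $O_N(\|w\|_{N,1}r^N)$ is not justified uniformly in $l$.
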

\begin{proof}
We follow the proof of ~\cite[Lemma 13]{HB}, and also keep track of any dependency on $w$. Let $c_0 = \textstyle\int_{-\infty}^{\infty}w_0(x)\,dx$, where $w_0(x)$ is defined in ~\eqref{eq:testtest}. For $\delta > 0$ define the function \begin{equation*}
w_1(\bs{x}) = w_{\delta}\left(\frac{\bs{x}-\bs{y}}{\delta},\bs{y}\right) = c_0^{-n}\prod_{i=1}^{n}w_0\left(\frac{x_i-y_i}{\delta}\right)w(\bs{x}).
\end{equation*}
Then by ~\cite[Lemmas 9,12,13]{HB} we have that 
\begin{equation*}
\begin{split}
I_q(\bs{0}) &= \delta^{-{n}} \int w_1(\boldsymbol{x}) h\left(r,Q_1(\boldsymbol{x}) - \frac{XQ_2(\boldsymbol{y}) + l}{Q^2}\right) \, d\boldsymbol{x} \, d\boldsymbol{y} \\
&= \delta^{-n} \int \left\{ \lim_{\kappa \to 0} \frac{1}{2\kappa} \int_{|Q_1(\boldsymbol{x_1}) - \frac{XQ_2(\boldsymbol{y}) + l}{Q^2}| \leq \kappa}w_1(\boldsymbol{x}) \, d\bs{x} +   O_N(r^N\|w\|_{N,1})\right\} \; d\bs{y} \\
&=  \lim_{\kappa \to 0} \frac{1}{2\kappa} \int_{|Q_1(\boldsymbol{x})-\frac{XQ_2(\boldsymbol{y})+l}{Q^2}|\leq \kappa}w(\boldsymbol{x}) \, d\boldsymbol{x}  + O_N(\delta^{-n}r^N\|w\|_{N,1}),
\end{split}
\end{equation*}
since $w(\bs{x}) = \delta^{-n} \textstyle \int w_{\delta}\left(\frac{\bs{x}-\bs{y}}{\delta},\bs{y}\right) \, d\bs{y}.$ This completes the proof of the lemma.
\end{proof}

\subsection{Proof of Proposition ~\ref{propmain}}
By Lemma ~\ref{restrictrangeprop}, and the fact that $c_Q = 1+O_A(Q^{-A})$, we get that
\begin{equation*}\label{eq:sab}
\begin{split}
S(\bs{a}_1,\boldsymbol{a}_2) &= \frac{Y^{k_1/2-1}X^{k_2/2}}{A_1^{k_1}A_2^{k_2}}\sum_{\substack{|\boldsymbol{c_1}| \ll \|w\|_{1,1}A_1X^{\varepsilon} \\ |\boldsymbol{c_2}| \ll \|w\|_{1,1}A_2\frac{\sqrt{Y}}{\sqrt{X}}X^{\ve}}}\sum_{q \ll Q} \frac{1}{q^{n}}S_{q}(\bs{c})I_{q}(\boldsymbol{c}) + O_N(Q^{-N}). \\
\end{split}
\end{equation*}
Define the following subsets of $\mathbf{Z}^n$. Let $\mathcal{C}_1 = \left\{\bs{0}\right\}$, \begin{equation*}
\begin{split}
\mathcal{C}_2 &= \left\{\bs{c} \in \mathbf{Z}^n : \begin{split} & \bs{c}_1 = \bs{0}, 1 \leq |\boldsymbol{c_2}| \ll \|w\|_{1,1}A_2\frac{\sqrt{Y}}{\sqrt{X}}X^{\ve} \end{split} \right\} \\
\mathcal{C}_3 &= \left\{\bs{c} \in \mathbf{Z}^n : \begin{split} & 1\leq|\boldsymbol{c_1}| \ll \|w\|_{1,1}A_1X^{\ve}, \boldsymbol{c_2}= \bs{0} \end{split} \right\}, \\
\end{split}
\end{equation*}
and $$\mathcal{C}_4 = \left\{\bs{c} \in \mathbf{Z}^n : \begin{split} &1\leq |\boldsymbol{c_1}| \ll \|w\|_{1,1}A_1X^{\ve} \\ &1\leq |\boldsymbol{c_2}| \ll \|w\|_{1,1}A_2\frac{\sqrt{Y}}{\sqrt{X}}X^{\ve} \end{split} \right\}.$$ We then have
\begin{equation}
\begin{split}
S(\bs{a}_1,\boldsymbol{a}_2) &= \frac{Y^{k_1/2-1}X^{k_2/2}}{A_1^{k_1}A_2^{k_2}}\sum_{i=1}^4\sum_{\substack{\bs{c} \in \mathcal{C}_i}}\sum_{q \ll Q}\frac{1}{q^{n}}S_{q}(\boldsymbol{c})I_{q}(\boldsymbol{c})  + O_N(Q^{-N}) \\ 
&= S_1 + S_2 + S_3 + S_4,
\end{split}
\end{equation}
say. 

\subsubsection{Analysis of the main term}
Using the trivial bound, $I_q(\bs{0}) \ll 1$ we have by Lemma ~\ref{lemma28} that
\begin{equation*}
\sum_{q \sim R} \frac{1}{q^{n}}S_{q}(\boldsymbol{0})I_{q}(\boldsymbol{0}) \ll (A_1A_2)^{2n}R^{\frac{3+\delta}{2}}R^{-\frac{n}{2}}.
\end{equation*}
Hence the terms $q > Q/\|w\|_{1,1}X^{\ve}$ in $S_1$ make a contribution that is $$O_{\ve}\left((A_1A_2)^{2n}\|w\|_{1,1}^{\frac{n}{2}}Y^{\frac{n-1+\delta}{4}+\ve}\right).$$ By Lemma ~\ref{infty} and ~\cite[Lemma 31]{HB} we have
\begin{equation}\label{eq:mt}
\begin{split}
S_1 =& \frac{c_{\infty}(w,l)Y^{\frac{k_1}{2}-1}X^{\frac{k_2}{2}}}{A_1^{k_1}A_2^{k_2}} \sum_{q=1}^{\infty} \frac{1}{q^{n}}S_{q}(\boldsymbol{0}) + O\left((A_1A_2)^{2n}\|w\|_{1,1}^{\frac{n}{2}}Y^{\frac{n-1+\delta}{4}+\ve}\right). \\
\end{split}
\end{equation}
\subsubsection{The leading constant}
Since $S_q(\bs{c})$ is multiplicative, it is a standard computation to show that
\begin{equation*}
\sum_{q=1}^{\infty} q^{-n}S_q(\bs{0}) = \prod s_p,
\end{equation*}
where
\begin{equation*}
s_p = \lim_{t \to \infty} \frac{\# \left\{\begin{split}&\boldsymbol{x_1} \mod{p^{t+v_p(A_1)}} \\ &\boldsymbol{x_2} \mod{p^{t+v_p(A_2)}} \end{split}: \begin{split} &\boldsymbol{x_1}\equiv \boldsymbol{a_1} \mod{p^{v_p(A_1)}} \\ &\boldsymbol{x_2}\equiv \boldsymbol{a}_2 \mod{p^{v_p(A_2)}}\\& p^t \mid Q_1(\boldsymbol{x_1})-Q_2(\boldsymbol{x_2})-l \end{split} \right\}}{p^{(n-1)t}}.
\end{equation*}
\subsubsection{Analysis of the error terms}
Recall that
\begin{equation*}
\begin{split}
S_2 &= \frac{Y^{\frac{k_1}{2}-1}X^{\frac{k_2}{2}}}{A_1^{k_1}A_2^{k_2}}\sum_{\bs{c} \in \mathcal{C}_2}\sum_{q \ll Q}\frac{1}{q^{n}}S_{q}(\boldsymbol{c})I_{q}(\boldsymbol{c}). 
\end{split}
\end{equation*}
By ~\eqref{eq:c20} we have that $$I_q(\bs{c}) \ll \frac{\|w\|_{1,1}^{\frac{n}{2}}A_2^{\frac{n}{2}}Y^{\frac{1}{2}+\ve}q^{\frac{n}{2}-1}}{X^{\frac{n}{4}}|\bs{c}_2|^{\frac{n}{2}}}.$$ As a result, 
\begin{equation*}
\begin{split}
S_2 &\ll \frac{Y^{\frac{k_1}{2}-1}X^{\frac{k_2}{2}}Y^{\frac{1}{2}+\ve}X^{-\frac{n}{4}}\|w\|_{1,1}^{\frac{n}{2}}}{A_1^{k_1}A_2^{\frac{k_2-k_1}{2}}}\sum_{\bs{c} \in \mathcal{C}_2}\frac{1}{|\bs{c}|^{\frac{n}{2}}}\sum_{q \ll \sqrt{Y}}\frac{|S_q(\bs{c})|}{q^{\frac{n}{2}+1}}\\
\end{split}
\end{equation*}
By Lemma ~\ref{lemma28} we obtain the bound, 
\begin{equation*}
\begin{split}
S_2 &\ll (A_1A_2)^{3n}Y^{\frac{k_1}{2}-1}X^{\frac{k_2}{2}}Y^{\frac{3+\delta}{4}+\ve}X^{-\frac{n}{4}}\|w\|_{1,1}^{\frac{n}{2}}\sum_{\bs{c} \in \mathcal{C}_2}\frac{1}{|\bs{c}|^{\frac{n}{2}}}.
\end{split}
\end{equation*}
To handle the sum over $\bs{c}$ we use the fact that
\begin{equation*}
\sum_{\substack{\bs{c} \in \mathbf{Z}^d\\ |\bs{c}| \leq T}} |\bs{c}|^{l} \ll \sum_{t \leq T} t^{d+l-1} \ll 1 + T^{d+l}. 
\end{equation*}
Hence we get
\begin{equation*}
\begin{split}
S_2 &\ll_{\ve}  \|w\|_{1,1}^{\frac{n}{2}}(A_1A_2)^{3n}Y^{\frac{k_1}{2}-1}X^{\frac{k_2}{2}}\left(Y^{\frac{3+\delta}{4}+\ve}X^{-\frac{n}{4}}\right) + \\
&\quad \quad \quad \quad\|w\|_{1,1}^{k_2}(A_1A_2)^{3n}Y^{\frac{n-1+\delta}{4}+\ve}.
\end{split}
\end{equation*}

Next, we consider 
\begin{equation*}
\begin{split}
S_3 &= \frac{Y^{\frac{k_1}{2}-1}X^{\frac{k_2}{2}}}{A_1^{k_1}A_2^{k_2}}\sum_{\bs{c} \in \mathcal{C}_3}\sum_{q \ll Q}\frac{1}{q^{n}}S_{q}(\boldsymbol{c})I_{q}(\boldsymbol{c}). 
\end{split}
\end{equation*}
By ~\eqref{eq:c20} we have
\begin{equation*}
\begin{split}
S_3 &\ll \frac{Y^{\frac{k_1}{4}-\frac{1}{2}+\ve}X^{\frac{k_2}{2}}Y^{-\frac{k_2}{4}}A_1^{\frac{n}{2}}\|w\|_{1,1}^{\frac{n}{2}}}{A_1^{k_1}A_2^{k_2}}\
\sum_{\bs{c} \in \mathcal{C}_3}\frac{1}{|\bs{c}|^{\frac{n}{2}}}\sum_{q \ll \sqrt{Y}}\frac{|S_q(\bs{c})|}{q^{\frac{n}{2}+1}},
\end{split}
\end{equation*}
and proceeding as before, by Lemma ~\ref{lemma28}, and summing over $\mathcal{C}_3$ we get,
\begin{equation*}
\begin{split}
S_3 &\ll_{\ve} \|w\|_{1,1}^{n}(A_1A_2)^{3n}Y^{\frac{n-1+\delta}{4}+\ve}.
\end{split}
\end{equation*}

Finally we have,  
\begin{equation*}
\begin{split}
S_4  = \frac{Y^{\frac{k_1}{2}-1}X^{\frac{k_2}{2}}}{A_1^{k_1}A_2^{k_2}}\sum_{\bs{c} \in \mathcal{C}_4}\sum_{q \ll Q}\frac{1}{q^{n}}S_{q}(\boldsymbol{c})I_{q}(\boldsymbol{c}).
\end{split}
\end{equation*}
Using ~\eqref{eq:cnot0} we see that
\begin{equation*}
\begin{split}
S_4 &\ll \frac{Y^{\frac{k_1}{4}-\frac{1}{2}+\ve}X^{\frac{k_2}{4}}A_1^{\frac{k_1}{2}}A_2^{\frac{k_2}{2}}\|w\|_{1,1}^{\frac{n}{2}}}{A_1^{k_1}A_2^{k_2}}\
\sum_{\bs{c} \in \mathcal{C}_4}\frac{1}{|\bs{c_1}|^{\frac{k_1}{2}}|\bs{c}_2|^{\frac{k_2}{2}}}\sum_{q \ll \sqrt{Y}}\frac{|S_q(\bs{c})|}{q^{\frac{n}{2}+1}}.
\end{split}
\end{equation*}
Using Lemma ~\ref{lemma28} once again to estimate the sum over $q$ and summing over $\bs{c} \in \mathcal{C}_4$, we get that
\begin{equation*}
S_4 \ll_{\ve} \|w\|_{1,1}^{n}(A_1A_2)^{3n}Y^{\frac{n-1+\delta}{4}+\ve}.
\end{equation*}
This completes the proof of Proposition ~\ref{propmain}.
\section{Proof of the main theorems}
We begin by proving Theorem ~\ref{thm1}.  
\subsection{Proof of Theorem ~\ref{thm1}}
First we show that it is sufficient to work with a smoothed version of $D(X,l)$. Let $1 \leq P \leq X$ be a parameter that we will choose later, and let $\alpha$ and $\beta$ be smooth functions with compact support, taking values in $[0,1]$ satisfying $\alpha^{(j)}(x) \ll_j 1$, and $\beta^{(j)}(x) \ll_j P^{j}$ such that
\begin{equation*}\label{eq:w1}
\alpha(x)=\begin{cases} 0 &\mbox{if $x \leq 0$}\\
1 &\mbox{if $1/P \leq x \leq 1$} \\ 
0 &\mbox{if $x \geq 2,$} 
\end{cases}
\end{equation*}
and
\begin{equation*}\label{eq:w2}
\beta(x)=\begin{cases} 0 &\mbox{if $x \leq 0$}\\
1 &\mbox{if $1/P \leq x \leq 1$} \\ 0 & \mbox{if $x \geq 1 + 1/P$}. \end{cases}
\end{equation*}
Define the sum
\begin{equation}\label{eq:smoothsumdefinition}
\widetilde{D}(X,l) = \sideset{}{^\flat}\sum_{m-n=l}h(-m)h(-n)\alpha\left(\frac{m}{X+l}\right)\beta\left(\frac{n}{X}\right).
\end{equation}
Then we have 
\begin{lemma}\label{smoothinglemma}
With notation as above, we have for all $\ve > 0$ that
\begin{equation*}
D(X,l) - \widetilde{D}(X,l) \ll X^{3/2}(X+l)^{1/2+\ve}/P.
\end{equation*}
\end{lemma}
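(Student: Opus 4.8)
The plan is to observe that $D(X,l)$ and $\widetilde{D}(X,l)$ agree throughout the bulk of the summation range, so that their difference is supported on two short intervals near the endpoints $n \asymp 1$ and $n \asymp X$, where the trivial bound on the class number is amply sufficient. Writing $m = n+l$ and $Y = X+l$, I would first record
\begin{equation*}
D(X,l) - \widetilde{D}(X,l) = \sideset{}{^\flat}\sum_{m-n=l} h(-m)h(-n)\, w(n), \qquad w(n) := \mathbf{1}[1 \leq n \leq X] - \alpha\!\left(\tfrac{m}{Y}\right)\beta\!\left(\tfrac{n}{X}\right),
\end{equation*}
where both sums carry the same restriction $\flat$, which I simply retain (it only reduces the number of terms).

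The key step is to locate the support of $w$. Using that $\alpha(x)=\beta(x)=1$ for $x\in[1/P,1]$, I claim $w(n)=0$ for $X/P \leq n \leq X$. Indeed, in this range $n/X\in[1/P,1]$ gives $\beta(n/X)=1$; and since $l\geq 0$ and $P\geq 1$ one checks that $(n+l)/(X+l)\in[1/P,1]$, the lower inequality reducing to $Pl\geq l$ and the upper one to $n\leq X$, so that $\alpha(m/Y)=1$ as well and hence $w(n)=1-1=0$. At the lower end $w(n)=0$ for $n\leq 0$, and at the upper end both $\beta(n/X)=0$ and $\mathbf{1}[1\leq n\leq X]=0$ once $n > X+X/P$. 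Consequently $w$ is supported on the two intervals $1\leq n < X/P$ and $X < n \leq X+X/P$, of total length $\ll X/P$, and there $|w(n)|\leq 1$.

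It then remains to bound trivially. By the class number formula $h(-n)\ll_{\ve} n^{1/2+\ve}$, so on the support of $w$ one has $h(-n)\ll X^{1/2+\ve}$ (since $n\ll X$) and $h(-m)\ll Y^{1/2+\ve}$ (since $m=n+l\ll Y$). Summing over the $\ll X/P$ relevant values of $n$ gives
\begin{equation*}
\bigl|D(X,l) - \widetilde{D}(X,l)\bigr| \ll \frac{X}{P}\cdot X^{1/2+\ve}Y^{1/2+\ve} \ll \frac{X^{3/2}(X+l)^{1/2+\ve}}{P},
\end{equation*}
after absorbing $X^{\ve}\leq Y^{\ve}$ and relabelling $\ve$, which is the asserted estimate.

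There is no serious obstacle here. The only point demanding care is verifying that the smooth weights are genuinely identically $1$ throughout the interior range $[X/P,X]$ — in particular that the shift $l$ does not push the argument $m/Y$ below $1/P$ — and this is exactly where the hypotheses $l\geq 0$ and $1\leq P\leq X$ enter. Once the difference has been localised to intervals of length $O(X/P)$, the crude bound $h(-n)\ll n^{1/2+\ve}$ closes the argument without any averaging over $n$.
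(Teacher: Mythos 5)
Your proof is correct and follows essentially the same route as the paper's: both localise the difference to the two boundary ranges $n < X/P$ and $X < n \leq X + X/P$ (where the smooth weights fail to equal the sharp cutoff) and then apply the trivial bound $h(-n) \ll_{\ve} n^{1/2+\ve}$ to the $\ll X/P$ remaining terms. Your verification that $\alpha(m/Y)=1$ throughout $X/P \leq n \leq X$, using $l \geq 0$ and $P \geq 1$, is a detail the paper leaves implicit but is exactly the right check.
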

\begin{proof}
By the definition of the smooth weights, 
\begin{equation*}
\begin{split}
\widetilde{D}(X,l) &= \sideset{}{^\flat}\sum_{n \leq X} h(-n)h(-n-l) \\ &\quad + \sideset{}{^\flat} \sum_{X < n \leq X+X/P}h(-n)h(-n-l)\alpha\left(\frac{n+l}{X+l}\right)\beta\left(\frac{n}{X}\right) \\ &\quad +O( \sideset{}{^\flat}\sum_{n < X/P} h(-n)h(-n-l))\\
&=D(X,l) + O(X^{1/2}(X+l)^{1/2+\ve}X/P).
\end{split}
\end{equation*}
\end{proof}
\subsection{Reduction to a counting problem}
Let $r_3(n)$ be the number of representations of $n$ as a sum of three squares. The key idea is to use an identity due to Gauss (see ~\cite[Proposition 5.3.10]{C}),
\begin{equation}\label{eq:r3class}
r_3(n) = 12\left(1-\left(\frac{-n}{2}\right)\right)h(-n),
\end{equation}
which holds when $n < -3$ is a fundamental discriminant. The identity enables us to transform the shifted sum $\sideset{}{^\flat}\sum h(-n)h(-n-l)$ to sums of the form \linebreak$\textstyle\sum r_3(n)r_3(n+l)$, which in turn reduces to the problem of counting integer points in bounded regions that lie on the quadratic form $m_1^2+m_2^2+m_3^2-n_1^2-n_2^2-n_3^2-l=0$. This counting problem is executed by appealing to Proposition ~\ref{propmain}. 

Recall that an integer $n$ is a fundamental discriminant if, $n \equiv 1 \pmod{4}$ and square-free, or $n=4m$ with $m$ square-free and $m\equiv 2$ or $3 \pmod{4}$. 

To handle the $2$-adic congruence conditions, we set up some notation. Let $S=\left\{1,4\right\}$. To each $s \in S$ we associate certain residue classes in $\mathbf{Z}/4\mathbf{Z}$, or $\mathbf{Z}/8\mathbf{Z}$. Set $R(1) = \left\{5 \right\}$, $M(1)=8$, $R(4) = \left\{2,3 \right\}$, $M(4)=4$, and attach weights, $\tau(1,1) = 1, \tau(1,4) = \tau(4,1) = 2$ and $\tau(4,4) = 4$, to pairs $(s,t) \in S\times S$.

Also, for a positive integer $A$ define
$$ \rho(A) = \sum_{\substack{\bs{x} \in \left(\mathbf{Z}/A\mathbf{Z}\right)^3 \\ F(\bs{x}) \equiv 0 \mod{A}}} 1.$$Excluding fundamental discriminants that are congruent to $1 \pmod{8}$ in ~\eqref{eq:smoothsumdefinition}, we get by ~\eqref{eq:r3class} that
\begin{equation}\label{eq:ti}
\begin{split}
\widetilde{D}(X,l) &= \frac{1}{576}\sum_{(s,t) \in S\times S}\tau(s,t) \times \\&\quad \quad \sum_{\substack{sm-tn = l \\ -m \in R(s) \mod{M(s)}\\ -n \in R(t)  \mod{M(t)}}} \mu^2(m)\mu^2(n)r_3(m)r_3(n)\alpha\left(\frac{sm}{X+l}\right)\beta\left(\frac{tn}{X}\right)  \\
&=\sum_{(s,t)\in S\times S} T(s,t), 
\end{split}
\end{equation}
say.
For the rest of the proof we use boldface $\boldsymbol{x}$ to denote a 3-tuple $(x_1,x_2,x_3)$, and by $F(\boldsymbol{x})=|\boldsymbol{x}|_2^2$ we denote the square of the $L^2$ norm of $\boldsymbol{x}$. 
We detect the square-free condition in ~\eqref{eq:ti} by using the identity $\mu^2(n) = \textstyle\sum_{d^2 \mid n} \mu(d)$. For instance, we have
\begin{equation*}
T(1,1) = \frac{1}{576}\sum_{k=1}^{\infty} \mu(k) \sum_{\substack{m-n=l \\ n \equiv 0 \mod{k^2} \\ n \equiv -5 \mod{8} \\ m \equiv -5 \mod{8}}} r_3(n) \mu^2(m)r_3(m)\alpha\left(\frac{m}{X+l}\right)\beta\left(\frac{n}{X}\right).
\end{equation*}
In the following lemma we show that the $k$-sum can be truncated, and that the tail makes a small contribution. Define  
\begin{equation}\label{eq:weighht}
w(\boldsymbol{x},\boldsymbol{y})=\alpha(|\boldsymbol{x}|_2^2)\beta(|\boldsymbol{y}|_2^2). 
\end{equation}
Let $s \in S$. For an integer $j$ define the set 
\begin{equation*}
\mathcal{A}_j(s) = \left\{\boldsymbol{a} \in (\mathbf{Z}/M(s)j^2\mathbf{Z})^3 : \begin{split} & F(\boldsymbol{a}) \equiv 0 \pmod{j^2} \\ & F(\boldsymbol{a}) \in R(s) \mod{M(s)} \end{split}\right\}.
\end{equation*}

\begin{lemma}\label{truncationlemma}
Fix $\eta > 0$. Then for all $\ve > 0$ we have
\begin{equation*}
\begin{split}
\widetilde{D}(X,l) &= \frac{1}{576}\sum_{(s,t)\in S\times S}\tau(s,t)\sum_{\substack{j,k \leq X^{\eta} \\ (2,jk)=1,(j,k)^2 \mid l}} \mu(j) \mu(k) \times \\ 
&\quad \quad \sum_{\substack{\boldsymbol{a}_1 \in \mathcal{A}_j(s) \\ \bs{a}_2 \in \mathcal{A}_k(t) }} \sum_{\substack{\boldsymbol{m},\boldsymbol{n} \in \mathbf{Z}^3 \\ \boldsymbol{m} \equiv \boldsymbol{a} \mod{M(s)j^2} \\ \boldsymbol{n} \equiv \boldsymbol{b} \mod{M(t)k^2} \\ sF(\boldsymbol{m}) - tF(\boldsymbol{n}) = l}} w\left(\frac{\sqrt{s}\boldsymbol{m}}{X+l},\frac{\sqrt{t}\boldsymbol{n}}{X}\right) \\ 
& \quad  \quad \quad + O(X^{3/2-\eta}(X+l)^{1/2+\varepsilon}).
\end{split}
\end{equation*}
The implied constant depends only on $\varepsilon$. 
\end{lemma}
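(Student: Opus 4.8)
The plan is to start from the decomposition \eqref{eq:ti} of $\widetilde{D}(X,l)$ into the four sums $T(s,t)$ and to open up each squarefree indicator via $\mu^2(m)=\sum_{j^2\mid m}\mu(j)$ and $\mu^2(n)=\sum_{k^2\mid n}\mu(k)$. Since the $2$-adic behaviour of $m$ and $n$ is already pinned down by the residues $R(s)\bmod M(s)$ and $R(t)\bmod M(t)$, I would run the squarefree detection only over the odd part, which produces the condition $(2,jk)=1$. Writing $r_3(m)r_3(n)=\#\{(\boldsymbol m,\boldsymbol n)\in\mathbf{Z}^3\times\mathbf{Z}^3: F(\boldsymbol m)=m,\ F(\boldsymbol n)=n\}$ turns the divisibilities $j^2\mid m$, $k^2\mid n$ into $F(\boldsymbol m)\equiv 0\pmod{j^2}$ and $F(\boldsymbol n)\equiv 0\pmod{k^2}$; combining these with the classes modulo $M(s)$ and $M(t)$ by the Chinese Remainder Theorem repackages the congruences on $\boldsymbol m,\boldsymbol n$ into residues $\boldsymbol a\in\mathcal{A}_j(s)$ modulo $M(s)j^2$ and $\boldsymbol b\in\mathcal{A}_k(t)$ modulo $M(t)k^2$, exactly as in the statement. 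Setting $g=(j,k)$, the divisibilities force $g^2\mid F(\boldsymbol m)$ and $g^2\mid F(\boldsymbol n)$, hence $g^2\mid sF(\boldsymbol m)-tF(\boldsymbol n)=l$; so the terms with $(j,k)^2\nmid l$ are empty and may be discarded, giving the summation conditions recorded in the lemma.

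It then remains to truncate the $j,k$-sums at $X^{\eta}$ and bound the tail $\max(j,k)>X^{\eta}$. Since all summands are nonnegative I may drop the Möbius factors and the coprimality/divisibility conditions; the remaining inner sum over square divisors of $F(\boldsymbol m)$ (resp.\ of $F(\boldsymbol n)$) costs only $\ll (X+l)^{\varepsilon}$. The two tails behave differently because of the lopsided scaling in \eqref{eq:weighht}: the vector $\boldsymbol n$ is confined to a ball of radius $\asymp\sqrt X$, whereas the relation $sF(\boldsymbol m)=tF(\boldsymbol n)+l$ with $F(\boldsymbol n)\asymp X$ confines $\boldsymbol m$ to a \emph{shell} on which $F(\boldsymbol m)$ ranges over an interval $I$ of length $\asymp X$ sitting near $\asymp X+l$. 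This confinement replaces the wasteful count $(X+l)^{3/2}$ of $\boldsymbol m$ in a full ball by the sharper $\asymp X(X+l)^{1/2}$, and it is essential for the error bound.

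For the $k$-tail I would bound $r_3(m)\ll(X+l)^{1/2+\varepsilon}$ pointwise and then sum $r_3$ directly over the ball, $\sum_{k^2\mid \nu,\ \nu\ll X}r_3(\nu)=\sum_{\mu\ll X/k^2}r_3(k^2\mu)\ll\sum_{\mu\ll X/k^2}(k^2\mu)^{1/2+\varepsilon}\ll X^{3/2+\varepsilon}k^{-2}$, using $r_3(N)\ll_{\varepsilon}N^{1/2+\varepsilon}$. Summing over $k>X^{\eta}$ (note $k^2\mid\nu\ll X$ forces $k\le X^{1/2}$) gives $\ll X^{3/2-\eta+\varepsilon}(X+l)^{1/2+\varepsilon}$. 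For the $j$-tail I bound $r_3(n)\ll X^{1/2+\varepsilon}$ and count $\boldsymbol m$ in the shell with $j^2\mid F(\boldsymbol m)$, i.e.\ $\sum_{\mu:\, j^2\mu\in I}r_3(j^2\mu)\ll (X/j^2+1)(X+l)^{1/2+\varepsilon}$ since each $j^2\mu\asymp X+l$. For $X^{\eta}<j\le X^{1/2}$ the factor $X/j^2\ge 1$ dominates and summing yields $\ll X^{3/2-\eta+\varepsilon}(X+l)^{1/2+\varepsilon}$, matching the claimed error.

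The hard part is the range $j>X^{1/2}$ of the $j$-tail, where a single multiple of $j^2$ may meet $I$ and the crude bound threatens to lose a factor $(X+l)^{1/2}$. Here I would reverse the order of summation and count instead pairs $(\boldsymbol m,\boldsymbol n)$ for which $F(\boldsymbol m)$ carries a square divisor exceeding $X$, writing $F(\boldsymbol m)=d^2\mu$ with $d>X^{1/2}$ and summing over the pairs $(d,\mu)$ with $d^2\mu\in I$. Since $I$ has length $\asymp X$ and sits near $\asymp X+l$, the number of such pairs is $\ll X^{1/2}+(X+l)/X$, and after inserting $r_3(m)r_3(n)\ll(X(X+l))^{1/2+\varepsilon}$ this contributes $\ll X^{1+\varepsilon}(X+l)^{1/2+\varepsilon}+X^{-1/2+\varepsilon}(X+l)^{3/2+\varepsilon}$, which is absorbed into $O(X^{3/2-\eta}(X+l)^{1/2+\varepsilon})$ in the relevant range of $l$. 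Collecting the two tails completes the truncation; the only genuinely delicate point is this uniform-in-$l$ estimate of the thin-shell $j$-tail, which hinges on the confinement of $\boldsymbol n$ to radius $\asymp\sqrt X$.
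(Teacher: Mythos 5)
Your proposal is correct and follows the paper's proof in all essentials: the same starting decomposition \eqref{eq:ti} into the $T(s,t)$, the same opening of the squarefree indicators via $\mu^2(m)=\sum_{j^2\mid m}\mu(j)$ with the conditions $(2,jk)=1$ and $(j,k)^2\mid l$ arising exactly as you describe, and your bound for the $k$-tail (fix $\bs{n}$ in the ball of radius $\asymp X^{1/2}$, count $\bs{m}$ via $r_3(N)\ll N^{1/2+\ve}$, then sum $\sum_{k>X^{\eta}}\sum_{k^2\mid n}r_3(n)\ll X^{3/2-\eta+\ve}$) is verbatim the paper's estimate for its tail sum $S_2$. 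The one place you go beyond the paper is the $j$-tail, which the paper dispatches with the single sentence ``we repeat this process by opening up $\mu^2(m)$ in $S_1$''. A literal repetition --- summing $r_3(m)$ over the full ball $m\ll X+l$ with $j^2\mid m$ --- gives only $X^{1/2-\eta+\ve}(X+l)^{3/2+\ve}$, which meets the claimed error merely for $l\ll X$; your exploitation of the confinement of $F(\bs{m})$ to an interval of length $\asymp X$ near $X+l$ is genuinely needed for the stated uniformity, and your residual caveat for the range $j>X^{1/2}$ (a contribution $O(X^{-1/2+\ve}(X+l)^{3/2+\ve})$, absorbed only when $l\ll X^{2-\eta}$) points to a real limitation in the lemma's claimed uniformity in $l$ that the paper's own sketch does not address. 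Since the main term of Theorem~\ref{thm1} dominates only for $l\ll X^{2-2\ve}$ and the eventual choice is $X^{\eta}=P\leq X^{1/30}$, this restriction is harmless in the application, so your argument is a correct and in fact more complete version of the paper's.
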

\begin{proof}
To simplify notation, we work with $T(1,1)$. The other terms are handled in exactly the same way. Opening up $\mu^2(n)$ we see that
\begin{equation*}
\begin{split}
T(1,1) &= \frac{1}{576}\sum_{k=1}^{\infty} \mu(k) \sum_{\substack{m-n=l \\ n \equiv 0 \mod{k^2} \\ n \equiv -5 \mod{8} \\ m \equiv -5 \mod{8}}} r_3(n) \mu^2(m)r_3(m)\alpha\left(\frac{m}{X+l}\right)\beta\left(\frac{n}{X}\right) \\
&= \sum_{k \leq X^{\eta}} + \sum_{k > X^{\eta}} = S_1 + S_2, \mbox{say.}
\end{split}
\end{equation*}
We have
\begin{equation*}
\begin{split}
S_2 &= \frac{1}{576}\sum_{k > X^{\eta}} \sum_{\substack{\boldsymbol{a}_2 \in \mathcal{A}_k(1)}} \sum_{\substack{\boldsymbol{m},\boldsymbol{n} \in \mathbf{Z}^3 \\ F(\boldsymbol{m}) \equiv -5 \mod{8} \\ \boldsymbol{n} \equiv \boldsymbol{a}_2 \mod{8k^2} \\ F(\boldsymbol{m}) - F(\boldsymbol{n}) = l}}\mu^2(F(\boldsymbol{m}))w\left(\frac{\boldsymbol{m}}{X+l},\frac{\boldsymbol{n}}{X}\right)\\
\end{split}
\end{equation*}
By choice of our weight function, $|\bs{m}| \ll (X+l)^{\frac{1}{2}}$ and $|\bs{n}| \ll X^{\frac{1}{2}}$. Furthermore, for each fixed $\bs{n}$, the number of $\bs{m}$ such that $F(\bs{m})-F(\bs{n}) - l =0$ is $O\left((X+l)^{\frac{1}{2}+\ve}\right)$. Since $k \ll X^{\frac{1}{2}}$ we have,
\begin{equation*}
\begin{split}
S_2 &\ll (X+l)^{\frac{1}{2}} \sum_{k > X^{\eta}} \sum_{\bs{a}_2 \in \mathcal{A}_k(1)} \sum_{\bs{n} \equiv \bs{a}_2 \pmod{8k^2}} 1 \\ 
&\ll (X+l)^{\frac{1}{2}} \sum_{k > X^{\eta}} \sum_{k^2 \mid n}r_3(n) 
\ll X^{3/2-\eta}(X+l)^{1/2+\varepsilon}.
\end{split}
\end{equation*}
We repeat this process by opening up $\mu^2(m)$ in $S_1$ to complete the proof.
\end{proof}

\begin{lemma}\label{singseriest}
Let $(s,t) \in S\times S$. Define the sum
\begin{equation*}\label{eq:tqjkl}
T_q(j,k,(s,t);l) = \sum_{\substack{\boldsymbol{a} \in \mathcal{A}_j(s) \\ \boldsymbol{b} \in \mathcal{A}_k(t)}}\sum_{\substack{(d,q)=1 \\ \boldsymbol{x} \mod{M(s)qj^2} \\ \boldsymbol{x} \equiv \boldsymbol{a} \mod{M(s)j^2} \\ \boldsymbol{y} \mod{M(t)qk^2} \\ \boldsymbol{y} \equiv \boldsymbol{b} \mod{M(t)k^2}}}e_q\left(d(F(\boldsymbol{x})-F(\boldsymbol{y})-l)\right).
\end{equation*}
Let $(2,jk)=1$. For $p$ a prime, let $j_p = v_p(j)$ and $k_p = v_p(k)$ be the $p$-adic valuations of $j$ and $k$ respectively. 
Then
\begin{equation*}
\sum_{q \leq Z}\frac{T_q(j,k,(s,t);l)}{q^6} = \gamma^{}(j,k,(s,t);l) + O(\rho(j^2)\rho(k^2)(jk)^{24}Z^{-\frac{3-\delta}{2}+\varepsilon}),
\end{equation*}
where $\gamma(j,k,(s,t);l) = (M(s)j^2)^3(M(t)k^2)^3 \gamma_2^{}(j,k,(s,t);l)\prod_{2 < p < \infty} \gamma_p(j,k;l)$ and
\begin{equation*}
\gamma_2^{}((s,t);l) = \lim_{t \to \infty} \frac{\# \left\{\begin{split}&\boldsymbol{x} \mod{2^{t}} \\ &\boldsymbol{y} \mod{2^{t}} \end{split}: \begin{split}& 2^t \mid sF(\boldsymbol{x})-tF(\boldsymbol{y})-l \\ & F(\boldsymbol{x}) \in R(s) \pmod{M(s)}\\ & F(\boldsymbol{y}) \in R(t) \pmod{M(t)}\end{split} \right\}}{2^{5t}}.
\end{equation*}
and
\begin{equation*}
\gamma_p^{}(j,k;l) = \lim_{t \to \infty} \frac{\# \left\{\begin{split}&\boldsymbol{x} \mod{p^{t}} \\ &\boldsymbol{y} \mod{p^{t}} \end{split}: \begin{split}& p^t \mid sF(\boldsymbol{x})-tF(\boldsymbol{y})-l \\ & p^{2j_p} \mid F(\boldsymbol{x}), p^{2k_p} \mid F(\boldsymbol{y}) \end{split} \right\}}{p^{5t}}.
\end{equation*}
when $p$ is odd.
\end{lemma}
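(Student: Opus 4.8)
The plan is to treat the sum $\sum_{q \le Z} T_q(j,k,(s,t);l)/q^6$ as a partial sum of a singular series arising from the $\delta$-method applied to the quadratic equation $sF(\boldsymbol{x}) - tF(\boldsymbol{y}) - l = 0$ in $n=6$ variables, subject to the congruence conditions encoded by the sets $\mathcal{A}_j(s)$ and $\mathcal{A}_k(t)$. The key observation is that $T_q(j,k,(s,t);l)$ is precisely an exponential sum of the form $S_q(\boldsymbol{0})$ studied in the preceding sections, where the moduli $A_1 = M(s)j^2$ and $A_2 = M(t)k^2$ play the role of the congruence moduli, and the residue classes $\boldsymbol{a}_1, \boldsymbol{a}_2$ range over $\mathcal{A}_j(s), \mathcal{A}_k(t)$. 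First I would verify this identification explicitly: summing $S_q(\boldsymbol{0})$ over the admissible residue classes $\boldsymbol{a} \in \mathcal{A}_j(s)$, $\boldsymbol{b} \in \mathcal{A}_k(t)$ reproduces $T_q(j,k,(s,t);l)$, so that the multiplicativity from Lemma \ref{multlem} and the bound from Lemma \ref{lemma28} are directly applicable.

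Next I would establish convergence and extract the main term $\gamma(j,k,(s,t);l)$. Because $S_q(\boldsymbol{0})$ is multiplicative in $q$ (Lemma \ref{multlem}), the full infinite sum $\sum_{q=1}^{\infty} q^{-6} T_q$ factors as an Euler product over primes, and the standard computation (identical to the one in the subsection ``The leading constant'') identifies the local factor at each prime $p$ as a $p$-adic density. The factor $(M(s)j^2)^3 (M(t)k^2)^3$ is exactly the normalisation $A_1^{k_1} A_2^{k_2}$ with $k_1 = k_2 = 3$, which converts the counting over residues mod $A_i$ into the local densities $\gamma_p$. For the prime $2$, the congruence $F(\boldsymbol{x}) \in R(s) \pmod{M(s)}$ is built into $\mathcal{A}_j(s)$ and produces the factor $\gamma_2((s,t);l)$; for odd $p$ the conditions $p^{2j_p} \mid F(\boldsymbol{x})$ and $p^{2k_p} \mid F(\boldsymbol{y})$ (coming from $\boldsymbol{x} \equiv \boldsymbol{a} \pmod{j^2}$ with $F(\boldsymbol{a}) \equiv 0 \pmod{j^2}$) yield $\gamma_p(j,k;l)$. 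Since $(2,jk)=1$, these two sources of congruence conditions are supported on disjoint sets of primes and the Euler product splits cleanly as claimed.

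The remaining task is to bound the tail $\sum_{q > Z} q^{-6} T_q$. Here I would apply Lemma \ref{lemma28} directly with $\boldsymbol{c} = \boldsymbol{0}$, $n = 6$, $A_1 = M(s)j^2 \ll j^2$, $A_2 = M(t)k^2 \ll k^2$. The lemma gives $\sum_{q \le Z'} |S_q(\boldsymbol{0})| \ll (A_1 A_2)^{2\cdot 6} (Z')^{(3+6+\delta)/2 + \varepsilon} \ll (jk)^{24} (Z')^{(9+\delta)/2 + \varepsilon}$; summing $S_q(\boldsymbol{0})$ over the $\rho(j^2)\rho(k^2)$ residue classes in $\mathcal{A}_j(s) \times \mathcal{A}_k(t)$ (and recalling $\#\mathcal{A}_j(s) \ll \rho(j^2)$) contributes the factor $\rho(j^2)\rho(k^2)$. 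A dyadic decomposition over the ranges $q \sim 2^i Z$ together with partial summation against $q^{-6}$ then yields a tail bound of size $\rho(j^2)\rho(k^2)(jk)^{24} Z^{-6 + (9+\delta)/2 + \varepsilon} = \rho(j^2)\rho(k^2)(jk)^{24} Z^{-(3-\delta)/2 + \varepsilon}$, matching the stated error term. The main obstacle I anticipate is the bookkeeping that the nested congruence conditions in $\mathcal{A}_j(s)$ translate faithfully into the local densities $\gamma_p$ and $\gamma_2$: one must check that the normalisation factor $(M(s)j^2)^3(M(t)k^2)^3$ correctly accounts for the transition from counting residue classes $\boldsymbol{a}, \boldsymbol{b}$ to the $p$-adic volumes, and that at the odd primes dividing $j$ or $k$ the valuation conditions $p^{2j_p} \mid F(\boldsymbol{x})$ are exactly what the Euler factor produces; this requires the hypothesis $(2,jk)=1$ so that no interaction occurs between the $2$-adic weight conditions and the square-free-sieve conditions.
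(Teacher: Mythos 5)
Your proposal is correct and takes essentially the same route as the paper: identify $T_q$ as $S_q(\boldsymbol{0})$ summed over the residue classes in $\mathcal{A}_j(s)\times\mathcal{A}_k(t)$, use the multiplicativity from Lemma~\ref{multlem} to factor the complete sum as an Euler product whose local factors are the densities $\gamma_p$, and bound the tail $q>Z$ by Lemma~\ref{lemma28} with $n=6$ and $A_1=M(s)j^2$, $A_2=M(t)k^2$, giving the stated $\rho(j^2)\rho(k^2)(jk)^{24}Z^{-\frac{3-\delta}{2}+\varepsilon}$ error. The paper delegates the identification of the Euler factors with the $p$-adic densities to a standard argument (citing \cite[Lemma 2.2]{HB2}), which is the step you spell out in slightly more detail.
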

\begin{proof}

To prove the lemma, we make the following claim, which is immediate from Lemma ~\ref{multlem}.
\begin{claim}
If $q=q_1q_2$, $j=j_1j_2$ and $k=k_1k_2$ with $(M(s)q_1j_1,q_2j_2) =1 $ and \newline $(M(t)q_1k_1,q_2k_2)=1$, then $$T_q(j,k,(s,t);l) = T_{q_1}(j_1,k_1,(s,t);l)T_{q_2}(j_2,k_2,(s,t);l).$$ 
\end{claim}
As a result, if $q=\prod p^{q_p}$, $j = \prod p^{j_p}$ and $k= \prod p^{k_p}$, then we have
$$T_q(j,k;l) = \prod_p T_{p^{q_p}}(p^{j_p},p^{k_p},(s,t);l).$$ By Lemma ~\ref{lemma28} we get that
\begin{equation*}
\begin{split}
\sum_{q \leq Z} \frac{T_q(j,k,(s,t);l)}{q^6} &= \sum_{q=1}^{\infty}\frac{T_q(j,k,(s,t);l)}{q^6} + O(\rho(j^2)\rho(k^2)(jk)^{24}Z^{-\frac{3-\delta}{2}+\varepsilon}).
\end{split}
\end{equation*}
Therefore, 
\begin{equation*}
\begin{split}
\sum_{q \leq Z} \frac{T_q(j,k,(s,t);l)}{q^6} &= \prod_p \sum_{k=0}^{\infty} \frac{T_{p^k}(p^{j_p},p^{k_p},(s,t);l)}{p^{6k}} \\ &\quad \quad + O(\rho(j^2)\rho(k^2)(jk)^{24}Z^{-\frac{3-\delta}{2}+\varepsilon}). \\
\end{split}
\end{equation*}
By a standard argument (for example, ~\cite[Lemma 2.2]{HB2}) it follows that 
\begin{equation*}
\sum_{q \leq Z} \frac{T_q(j,k,(s,t);l)}{q^6} = \gamma^{}(j,k,(s,t);l) + O(\rho(j^2)\rho(k^2)(jk)^{24}Z^{-\frac{3-\delta}{2}+\varepsilon}).
\end{equation*}
This completes the proof of the lemma. 
\end{proof}
\subsubsection{Applying the main proposition}

We apply Proposition ~\ref{propmain} and Lemma ~\ref{infty} to each of the terms that appear in Lemma ~\ref{truncationlemma}, with $Q_1(\bs{x}) = Q_2(\bs{x}) = x_1^2+x_2^2+x_3^2$, and $A_1 = M(s)j^2, A_2=M(t)k^2$ and $Y=X+l$. Observe that our weight function in ~\eqref{eq:weighht} satisfies $\|w\|_{N,1} \ll_N P^N \asymp \|w\|_{1,1}^N$. Moreover, from the nature of the function $w$, we can take $\delta = P^{-\frac{1}{2}}$ in Lemma ~\ref{infty}. Putting everything together, and using the fact that $\rho(j^2) \leq j^6$, we get that
\begin{equation*}\label{eq:prefinal}
\begin{split}
\widetilde{D}(X,l) &= \frac{X^{\frac{3}{2}}Y^{\frac{1}{2}}}{576}\sum_{(s,t)\in S\times S}\tau(s,t) \sigma_{\infty}(w,(s,t),l)\times \\ &\quad \quad \sum_{\substack{j,k \leq X^{\eta} \\(2,jk)=1 \\ (j,k)^2 \mid l}} \frac{\mu(j) \mu(k)\gamma^{}(j,k,(s,t);l)}{M(s)^3j^6M(t)^3k^6} \,  + \\
&\quad \quad \quad O_{\ve}\left(X^{\frac{3}{2}+38\eta}Y^{\frac{-1+\delta}{4}+\ve} + P^6X^{38\eta}Y^{\frac{5+\delta}{4}+\ve} + X^{\frac{3}{2}-\eta}Y^{\frac{1}{2}+\ve}\right),
\end{split}
\end{equation*}
where \begin{equation*}
\begin{split}
\sigma_{\infty}(w,(s,t),l) &= \lim_{\kappa \to 0} \frac{1}{2\kappa} \int_{|sF(\boldsymbol{x})-t\frac{XF(\boldsymbol{y})+l}{Q^2}|\leq \kappa}w((\sqrt{s}\bs{x}_1,\sqrt{t}\bs{x}_2)) \, d\boldsymbol{x} \\
&= \frac{\sigma_{\infty}(w,(1,1),l)}{\tau^3(s,t)}
\end{split}
\end{equation*} is the singular integral that corresponds to $T(s,t)$ in Lemma ~\ref{infty}. The first error term above comes from applying Lemma ~\ref{singseriest}, the second from the application of Proposition ~\ref{propmain}, and the last error term results from invoking Lemma ~\ref{truncationlemma}. It is easy to see that $\frac{\gamma^{}(j,k,(s,t);l)}{M(s)^3j^6M(t)^3k^6} \ll (jk)^{-\frac{3}{2}}$, so we may extend the $j$ and $k$ sums to $\infty$ to get
\begin{equation}\label{eq:thm1final}
\begin{split}
\widetilde{D}(X,l) &= \frac{\widehat{\sigma}_w(l)}{576}X^{\frac{3}{2}}Y^{\frac{1}{2}} \\
&\quad \quad + O\left(X^{\frac{3}{2}+38\eta}Y^{\frac{-1+\delta}{4}+\ve} + P^6X^{38\eta}Y^{\frac{5+\delta}{4}+\ve} + X^{\frac{3}{2}-\eta}Y^{\frac{1}{2}+\ve}\right),
\end{split}
\end{equation}
where 
\begin{equation}\label{eq:const12}
\widehat{\sigma}_w(l) = \sigma_{\infty}(w,(1,1),l) \prod_{p < \infty} \sigma_p(l),
\end{equation}
\begin{equation*}\label{eq:constant1}
\sigma_p(l) = \begin{cases} \sum_{(s,t) \in S\times S}\frac{\gamma_2^{}((s,t),l)}{\tau(s,t)^2} &\mbox{$p=2$} \\ 
\gamma_p(1,1;l) - \gamma_p(p,1;l)-\gamma_p(1,p;l) + \gamma_p(p,p;l) &\mbox{$2 < p < \infty$}.
\end{cases}
\end{equation*}
\subsubsection{Removing the weight $w$}\label{removingweightssection}
By our choice of test function $w$ it follows that
\begin{equation*}\label{eq:removeweight}
\begin{split}
\sigma_{\infty}(w,(1,1),l) &= \sigma_{\infty}(l) + O(1/P),
\end{split}
\end{equation*}
$\sigma_{\infty}(l) = \lim_{\kappa \to 0} \frac{1}{2\kappa} \int_{|F(\bs{x}_1) - \frac{XF(\bs{x}_2)+l}{X+l}|\leq \kappa} d\bs{x}$ and the integral is over the region $$\mathcal{R} = \left\{\bs{x} \in \mathbf{R}^6 : |F(\bs{x}_1)| \leq 1, |F(\bs{x}_2)| \leq 1\right\}.$$ Therefore, by taking $X^{\eta} = P = X^{\frac{1}{30}}Y^{-\frac{3+\delta}{180}-2\ve}$, it follows from Lemma ~\ref{smoothinglemma} and ~\eqref{eq:thm1final} that  
\begin{equation*}
D(X,l) = \frac{\widehat{\sigma}(l)}{576} X^{\frac{3}{2}}Y^{\frac{1}{2}} + O_{\ve}\left(X^{\frac{3}{2}-\frac{1}{30}}Y^{\frac{1}{2}+\frac{3+\delta}{180}+\ve}\right),
\end{equation*}
where
\begin{equation}\label{eq:const11}
\widehat{\sigma}(l) = \prod_{p \leq \infty}\sigma_p(l).
\end{equation}
This completes the proof of Theorem ~\ref{thm1}.

\begin{remark}\label{remark}
It is easy to explicitly compute the singular integral. Indeed, we have for $l \neq 0$ that
\begin{equation}\label{eq:explicit}
\sigma_{\infty}(l) = \frac{\pi^2}{3X^{\frac{3}{2}}(X+l)^{\frac{1}{2}}}\left\{(2X+l)\sqrt{X(X+l)} - l^2 \arcsinh\left(\sqrt{\frac{X}{l}}\right)\right\}. 
\end{equation}
To see this, recall that
\begin{equation*}
\sigma_{\infty}(l) = \lim_{\kappa \to 0} \frac{1}{2\kappa} \int_{|F(\bs{x}_1) - \frac{XF(\bs{x}_2)+l}{X+l}|\leq \kappa} d\bs{x}.
\end{equation*}
Integrating first over $\bs{x}_1$ we have that
\begin{equation*}
\begin{split}
\sigma_{\infty}(l) &= \lim_{\kappa \to 0} \frac{1}{2\kappa} \frac{4\pi}{3}\int \left(\frac{XF(\bs{x}_2)+l}{X+l}+\kappa\right)^{\frac{3}{2}} - \left(\frac{XF(\bs{x}_2)+l}{X+l}-\kappa\right)^{\frac{3}{2}} \, d\bs{x}_2 \\ 
&= \frac{4\pi}{3}\frac{1}{\sqrt{X+l}}\int_{F(\bs{x}_2)\leq 1} \sqrt{XF(\bs{x}_2)+l} \, d\bs{x}_2 \\  
\end{split}
\end{equation*}
Switching to spherical co-ordinates, we find that
\begin{equation}\label{eq:1}
\begin{split}
\sigma_{\infty}(l) &= \frac{16\pi^2}{3\sqrt{X+l}}\int_{0}^1 r^2\sqrt{r^2X+l} \, dr,
\end{split}
\end{equation}
and ~\eqref{eq:explicit} follows. As a result, we see that $\sigma_{\infty}(l) = \frac{4\pi^2}{3} + O(X^{-\ve})$ whenever $l \ll X^{1-2\ve}$. Denote by $I$ the integral over $r$ in ~\eqref{eq:1}. Set $r^2X=t$. Then we have that
\begin{equation*}
\begin{split}
I &= \frac{1}{2X^{\frac{3}{2}}}\int_{0}^X t^{\frac{1}{2}}(t+l)^{\frac{1}{2}} \, dt \\
&= \frac{\sqrt{X+l}}{3} - \frac{1}{6X^{\frac{3}{2}}} \int_{0}^X t^{\frac{3}{2}}(t+l)^{-\frac{1}{2}} \, dt.
\end{split}
\end{equation*}
As a result, when $l \gg X^{1+2\ve}$ we find that $\sigma_{\infty}(l) = \frac{16\pi^2}{9} + O(X^{-\ve})$. Moreover, in the range $0 \leq l \ll X^{2-2\ve}$ we have that $1 \ll \sigma_{\infty}(l) \ll 1$, and the implied constants are absolute. 
\end{remark}
\subsection{Proof of theorem ~\ref{thm3}}
The proof of Theorem ~\ref{thm3} is similar to the proof of Theorem ~\ref{thm1}, so we only give a brief outline. Here we adopt the notation where a $4$-tuple $\bs{x}$ is written $\bs{x} = (\bs{x}_1,\bs{x}_2)$, and $\bs{x}_1$ is a 3-tuple. Once again it suffices to consider the following weighted analogue of $S(X,d)$,
\begin{equation}\label{eq:swd}
\tilde{S}(X,d) = \sideset{}{^{\flat}}\sum \beta(n/X)h(-(n^2+d)).
\end{equation} 
Let $Q_1(\bs{x}) = x_1^2+x_2^2+x_3^2$ and $Q_2(x) = x^2$.  

As before, we need some notation to handle the $2$-adic congruence conditions. Let $S = \left\{3,4,8\right\}.$ Let $M(4)=M(8)=16$ and $M(3)=8$. Let $\tau(4)=\tau(8)=2$ and $\tau(3) = 1$. For $s \in S$ define  
\begin{equation*}
\mathcal{A}_j(s) = \left\{\bs{a}_1 \in \left(\mathbf{Z}/M(s)j^2\mathbf{Z}\right)^3 : \begin{split} & Q_1(\bs{a}_1) \equiv s \mod{M(s)} \\ &Q_1(\bs{a}_1) \equiv 0 \mod{j^2} \end{split} \right\}.
\end{equation*}
Since we are excluding fundamental discriminants $-(n^2+d)$ that are congruent to $1 \pmod{8}$ we get, for $\eta > 0$ and any $\ve > 0$ that
\begin{equation*}
\begin{split}
\widetilde{S}(X,d) &= \frac{1}{24}\sum_{s \in S} \tau(s) \sum_{\substack{j \leq X^{\eta}\\(2,j)=1}} \mu(j) \sum_{\bs{a}_1 \in \mathcal{A}_j(s)} \sum_{\substack{\bs{x}_1 \equiv \bs{a}_1 \mod{M(s)j^2} \\ Q_1(\bs{x}_1) - Q_2(\bs{x}_2) = d}} \alpha\left(\frac{Q_1(\bs{x}_1)}{X^2+d}\right)\beta\left(\frac{\bs{x}_2}{X}\right) \\ 
&\quad \quad + O_{\ve}(X^{1-\eta}(X^2+d)^{\frac{1}{2}+\ve}).
\end{split}
\end{equation*}
Let \begin{equation*}
\gamma_2(s,d) =  \lim_{t \to \infty} \frac{\# \left\{\begin{split}&\boldsymbol{x} \mod{2^{t}} \end{split}: \begin{split}& 2^t \mid Q_1(\boldsymbol{x}_1)-Q_2(\boldsymbol{x_2})-d \\ & Q_1(\bs{x}_1) \equiv s \mod{M(s)} \end{split} \right\}}{2^{3t}}
\end{equation*} 
and set
$\gamma(d) = \prod_{p < \infty} \gamma_p(j)$, where
$$\gamma_2(d) = \sum_{s \in S} \tau(s)\gamma_2(s,d),$$ and 
\begin{equation*}
\gamma_p(j;d) = \lim_{t \to \infty} \frac{\# \left\{\begin{split}&\boldsymbol{x} \mod{p^{t}} \end{split}: \begin{split}& p^t \mid Q_1(\boldsymbol{x}_1)-Q_2(\boldsymbol{x_2})-d \\ & p^{2v_p(j)} \mid Q_1(\boldsymbol{x}_1) \end{split} \right\}}{p^{3t}}
\end{equation*}
when $p$ is odd. Also define
\begin{equation*}
\gamma_{\infty}(d) = \lim_{\kappa \to 0} \frac{1}{2\kappa} \int_{|Q_1(\bs{x}_1) - \frac{X^2Q_2(\bs{x}_2)+d}{X^2+d}|\leq \kappa} d\bs{x},
\end{equation*}
where the integral on the right is over the region $$\mathcal{R} = \left\{\bs{x} \in \mathbf{R}^4 : |Q_1(\bs{x}_1)| \leq 1, |Q_2(\bs{x}_2)| \leq 1\right\}.$$ Following the proof of Theorem ~\ref{thm1}, and replacing $X$ by $X^2$, and $Y$ by $(X^2+d)$ we get that
\begin{equation}
\begin{split}
~\widetilde{S}(X,d) &= \frac{\widetilde{\sigma}(d)}{24}X(X^2+d)^{\frac{1}{2}} \, + \\ &\quad \quad O_{\ve}\left(P^{4}X^{13\eta}(X^2+d)^{\frac{3}{4}+\ve} + X^{1-\eta}(X^2+d)^{\frac{1}{2}+\ve}\right),  
\end{split}
\end{equation}
where $\widetilde{\sigma}(d) = \prod_{p \leq \infty} \widetilde{\sigma}_p(d)$ and
\begin{equation}\label{eq:constant2}
\widetilde{\sigma}_p(d) = \begin{cases} \gamma_2(d) &\mbox{$p = 2$} \\ 
\gamma_p(1;d) - \gamma_p(p;d) &\mbox{$2 < p < \infty$} \\
\gamma_{\infty}(d) &\mbox{$p = \infty$}. \end{cases}
\end{equation}
To complete the proof, take $X^{\eta} = P = X^{\frac{1}{18}}(X^2+d)^{-\frac{1}{72}-2\ve}$ to get the desired estimate for $S(X,d)$, since $S(X,d) - \tilde{S}(X,d) \ll_{\ve} X(X^2+d)^{\frac{1}{2}+\ve}/P$. 
\subsection{Proof of Theorems ~\ref{iwaniec} and ~\ref{thmr}}
Let $Q(x_1,x_2) = x_1^2+x_2^2$. To prove Theorem ~\ref{iwaniec}, we start with the smoothed sum $S = \textstyle\sum_{m-n=l}\alpha\left(\frac{m}{X+l}\right)\beta\left(\frac{n}{X}\right)r(m)r(n)$, and we see that it differs from the unsmoothed sum by at most $O(X^{1+\ve}/P)$. Applying Proposition ~\ref{propmain} with $Q_1 = Q_2 = Q$, $A_1 = A_2 = 1$ and \newline $w(\bs{x}) = \alpha(Q_1(\bs{x}_1))\beta(Q_2(\bs{x}_2),$ we get
\begin{equation*}
S = c'(l)X + O_{\ve}(P^4(X+l)^{\frac{3}{4}+\ve}),
\end{equation*}
where $c'(l)=c_{\infty}(w,l)\prod_{p}c_p(l)$, with $c_{\infty}(w,l)$ and $c_p(l)$ are as in ~\eqref{eq:cwl} and ~\eqref{eq:ccp}. As before, we have
\begin{equation*}
\begin{split}
c_{\infty}(w,l) &=  \lim_{\kappa \to 0} \frac{1}{2\kappa} \int_{|Q(\bs{x}_1) - \frac{XQ(\bs{x}_2)+l}{X+l}|\leq \kappa} d\bs{x} + O(1/P),
\end{split}
\end{equation*}
where we integrate over the region $$\mathcal{R} = \left\{\bs{x} \in \mathbf{R}^4 : |Q(\bs{x}_1)| \leq 1, |Q(\bs{x}_2)| \leq 1\right\}.$$ Since we are integrating over discs in $\mathbf{R}^2$, it is easy to see that $c_{\infty}(w,l) = \pi^2 + O(1/P)$. Setting $c(l) = \pi^2 \prod_{p < \infty}c_p(l)$ and $P = X^{\frac{1}{5}}(X+l)^{-\frac{3}{20}-2\ve}$ we get that 
\begin{equation*}
\sum_{n \leq X} r(n)r(n+l) = c(l)X + O_{\ve}\left(X^{\frac{4}{5}}(X+l)^{\frac{3}{20}+\ve}\right).
\end{equation*}
It is well-known that $c(l) \ll l^{\ve}$, and $c(l) \neq 0$ if and only if $c_p(l) \neq 0$, if and only if the equation $x_1^2+x_2^2-x_3^2-x_4^2-l=0$ has a solution in $\mathbf{Q}_p$. 

The proof of Theorem ~\ref{thmr} is similar, and follows at once from Proposition ~\ref{propmain} by taking $A_1=A_2=1$, and by setting $w(\bs{x}) = \alpha(Q_1(\bs{x}_1))\beta(Q_2(\bs{x}_2)),$ with $P = X^{\frac{m}{2(2m+1)}}(X+l)^{-\frac{3+\delta}{4(2m+1)}}.$ 

\begin{ack}
I would like to thank my supervisor, Tim Browning, for suggesting this problem to me, and for his guidance throughout the process of writing this paper, including his detailed comments on earlier drafts. I would also like to thank Jonathan Bober and Rainer Dietmann for their comments and suggestions. 
\end{ack}

\end{document}